\newcommand{\blue}{\color{blue}}
\numberwithin{equation}{section}
\newcommand{\ben}{\begin{enumerate}}
\newcommand{\een}{\end{enumerate}}
\newcommand{\bea}{\begin{eqnarray}}
\newcommand{\ba}{\begin{array}}
\newcommand{\bean}{\begin{eqnarray*}}
\newcommand{\ea}{\end{array}}
\newcommand{\eea}{\end{eqnarray}}
\newcommand{\eean}{\end{eqnarray*}}
\newcommand{\beq}{\begin{equation}}
\newcommand{\eeq}{\end{equation}}
\newcommand{\bthm}{\begin{thm}}
\newcommand{\ethm}{\end{thm}}
\newcommand{\blem}{\begin{lem}}
\newcommand{\elem}{\end{lem}}
\newcommand{\bprop}{\begin{prop}}
\newcommand{\eprop}{\end{prop}}
\newcommand{\bcor}{\begin{cor}}
\newcommand{\ecor}{\end{cor}}
\newcommand{\bdfn}{\begin{dfn}}
\newcommand{\edfn}{\end{dfn}}
\newcommand{\brem}{\begin{rem}}
\newcommand{\erem}{\end{rem}}
\newcommand{\bpf}{\begin{proof}}
\newcommand{\epf}{\end{proof}}
\newcommand{\bfact}{\begin{fact}}
\newcommand{\efact}{\end{fact}}
\newcommand{\bobs}{\begin{obs}}
\newcommand{\eobs}{\end{obs}}
\newcommand{\bexam}{\begin{exam}}
\newcommand{\eexam}{\end{exam}}
\newcommand{\bclaim}{\begin{claim}}
\newcommand{\eclaim}{\end{claim}}
\newtheorem{thm}{Theorem}[section]
\newtheorem{prop}[thm]{Proposition}
\newtheorem{lem}[thm]{Lemma}
\newtheorem{cor}[thm]{Corollary}
\newtheorem{dfn}[thm]{Definition}
\newtheorem{rem}[thm]{Remark}
\newtheorem{fact}[thm]{Fact}
\newtheorem{claim}[thm]{Claim}
\newtheorem{obs}[thm]{Observation}
\newtheorem{exam}[thm]{Example}
\newtheorem*{condition'}{Condition 2'}
 \newtheoremstyle{claimstyle}%
   {}
   {}
   {\normalfont}
   {}
   {\itshape}
   {.}
   { }
   {\thmnote{#3}}
\theoremstyle{claimstyle}
             \def\cB{\mathcal B}       \def\cC{\mathcal C}
\def\cH{\mathcal H}                    
\def\cL{{\mathcal L}}                   
\def\cU{\mathcal U}             \def\cV{\mathcal V}          \newcommand{\J}{\mathcal{J}}
\def\cW{\mathcal W}           \def\cK{\mathcal K}         
 \def\cI{\mathcal I} \def\cD{\mathcal D}
                \def\Z{{\mathbb Z}}      \def\R{{\mathbb R}}
\def\C{{\mathbb C}}
                \def\b{\beta}             \def\d{\delta}
\def\De{\Delta}                          
                \def\Ga{\Gamma}           \def\l{\lambda} 
              \def\om{\omega}           \def\Om{\Omega}
               \def\sg{\sigma}
\def\Th{\Theta}                          
\def\ka{\kappa}
\newcommand{\ep}{\varepsilon}
\newcommand{\ph}{\varphi}
\newcommand{\al}{\alpha}
\newcommand{\ga}{\gamma}
\def\1{1\!\!1}
\def\and{\text{ and }}
\def\dist{\text{{\rm dist}}}
\def\h{{\text h}}
\def\Int{\text{{\rm Int}}}
         \def\P{\text{{\rm P}}}
\def\({\bigl(}                \def\){\bigr)}
                        \def\^{\tilde}
            \def\sms{\setminus}
\def\sbt{\subset}
\def\ov{\overline}
\def\om{\omega}
\def\D{{\mathbb D}}
\def\${$ \displaystyle }
\newcommand{\pft}{{\mathcal{L}}_t}
\newcommand{\pftf}{{\mathcal{L}}_{t,f}}
\newcommand{\pftF}{{\mathcal{L}}_{t,F}}
\newcommand{\wh}{{\mathbf{W}}}
\def\disf{{\bf f}}
\def\disF{{\bf F}}
\newcommand{\pf}{{\mathcal{L}}}
\newcommand{\jul}{\mathcal J}
\newcommand{\fat}{\mathcal F}
\def\Tract{\Omega}
\def\lm{{\rm l}_m}
\def\lmm{{\rm l}}
\begin{document}

\title[The failure of Ruelle's property for entire functions]{The failure of Ruelle's property for entire functions}


\author[\sc Volker MAYER]{\sc Volker Mayer}
\address{Volker Mayer, Universit\'e de Lille, D\'epartement de
  Math\'ematiques, UMR 8524 du CNRS, 59655 Villeneuve d'Ascq Cedex,
  France} \email{volker.mayer@univ-lille.fr \newline
  \hspace*{0.42cm} \it Web: \rm math.univ-lille1.fr/$\sim$mayer}

  \author[\sc Anna ZDUNIK]{\sc Anna Zdunik}
\address{Anna Zdunik, Institute of Mathematics, University of Warsaw,
ul. Banacha 2, 02-097 Warszawa, Poland}
\email{A.Zdunik\@@mimuw.edu.pl}

\date{\today} \subjclass{Primary 37F10; Secondary 30D05, 37F45, 28A80.}
\thanks{This work was supported in part by the National Science Centre, Poland, grant no 2018/31/B/ST1/02495 and
by the Labex CEMPI  (ANR-11-LABX-0007-01).}


\begin{abstract}  
We exhibit an analytic family of hyperbolic, even disjoint type, entire functions 
for which the hyperbolic dimension does not vary analytically. Additionally we answer several
questions in thermodynamic formalism of entire functions such as the existence of 
a hyperbolic entire function without conformal measure  that is supported
on the radial Julia set.
 \end{abstract}

\maketitle


\vspace{-0.3cm}


\section{Introduction} \label{s1}

Ruelle \cite{Ru82}, answering a conjecture of Sullivan, has shown  that the Hausdorff dimension 
of the Julia set of hyperbolic rational functions depends analytically on the map. An alternative approach
to this result is contained in the monograph \cite{Zins2000} and, following Bishop \cite{Bishop06},
we will call it Ruelle's property.
The hyperbolicity assumption is essential in this result (see \cite{Shi98} and \cite{DSZ97}) and thus,
 in all what follows, we assume that the analytic families under considerations always have this property.

The paper \cite{Ru82} has been published in 1982 and since then this property has been generalized in many ways. 
Ruelle himself also established it for 
analytic quasiconformal deformations of cocompact Fuchsian groups,
a result which has been extended by Anderson and Rocha \cite{AR97} to convex co-compact 
Kleinain groups. There is also a version for Henon maps in $\C^2$ by Verjovsky and Wu \cite{VW96}, for rational
semi-groups by Sumi and Urba\'nski \cite{Sumi09} and one for hyperbolic surface diffeomorphisms 
by Pollicott \cite{Po15}. Employing Birkhoff's cone method,  Rugh \cite{Rugh} extended recently Ruelle's property to random $C^1$--conformal repellers.

The common tool in all analyticity results is Bowen's formula (see \cite{Bow79} for the original version) which expresses the dimension in terms of the zero of a 
pressure function. One should have in mind that this formula really determines the \emph{hyperbolic dimension}
which is  the supremum over the Hausdorff dimensions of hyperbolic subsets of the Julia set (see Shishikura \cite{Shi98}).
For most  rational functions, in particular for all hyperbolic ones, the hyperbolic dimension coincides with the Hausdorff dimension of the Julia set.
In transcendental dynamics however the situation is different: in general, there is a definite gap between these two dimensions 
(see \cite{St99-1} and \cite{UZ03}) and a typical phenomenon in the case of entire functions is that the dimension of the Julia set itself is often maximal, i.e. equal to  2. The later was first observed by McMullen \cite{McM87} and Bara\'nski \cite{B08} has shown that this property also holds for all entire functions of finite order and of class $\cB$, 
the class  introduced by Eremenko and Lyubich \cite{EL92} which consists in the entire functions that have a bounded singular set
(see Section 2.1 for the definitions of the singularities).
The intriguing thing then is how the hyperbolic dimension behaves.


Urba\'nski and Zdunik showed analytic variation of the hyperbolic dimension at hyperbolic parameters $\l\in \C^*$ for the exponential family $\l e^z$ in \cite{UZ04}.
After this first result for transcendental dynamics, many contributions where made. The papers \cite{UZ04, MyUrb08}
along with Skorulski and Urba\'nski's results in \cite{SU14} show that Ruelle's property does hold in great generality and for most of the 
classical families of transcendental functions, in particular for
 the sine, the tangent and the Weierstrass elliptic family. It also has been established in the realm of random dynamics for a class of transcendental functions 
in \cite{Mayer:2016aa} and even beyond the scope of hyperbolic functions. Indeed,  Kotus and Urba\'nski  \cite{KoUrb} considered a family of Fatou's function that have a persistent Baker domain and for which the hyperbolic dimension still behaves analytically.

 Given all these results, real analytic dependence of the hyperbolic dimension does hold in great generality in transcendental dynamics. Contrary to that, we provide here the first example
of an analytic family of entire hyperbolic functions for which Ruelle's property breaks down.

\bthm\label{theo main}
There exists a holomorphic family of finite order entire functions $\disF_\l= \l \, \disF$, $  \l\in \C\setminus \{0\}$, 
of class $\cB$
such that 
the functions $\disF_\l$, $\l\in (0, 1]$, 
are all in the same hyperbolic component of the parameter space but the function
$$\l \mapsto HypDim(\disF_\l )$$
is not  analytic in $(0, 1]$,  where  $HypDim(\disF _\l )$ denotes the hyperbolic dimension of  $\disF_\l$.
\ethm

For limit sets of Kleinian and Fuchsian groups such a break down of  Ruelle's property was observed
initially by
Astala and Zinsmeister \cite{AstZins94}.
They gave an example of an analytic family of infinitely generated quasifuchsian groups 
for which Ruelle's analyticity result does not hold.
Bishop \cite{Bishop06} subsequently extended their result and gave a criterion for the failure
of analyticity for a class of infinitely generated quasifuchsian groups.
More recently, Huo and Wu \cite{HuoWu15} established an analogous result for deformations of Fuchsian groups of the second kind.

\medskip


Functions of class $\cB$ have only logarithmic singularities over infinity (see Section 2.1)
and the functions $\disF_\l$ of Theorem \ref{theo main}
are built in such a way that they
have only one logarithmic singularity over infinity but  a very special one. For such functions we dispose in a complete
theory of thermodynamic formalism  \cite{MUpreprint3}. This theory relies on the behavior of the transfer operator (see Section \ref{s3} $\,$
for the definition and properties of the transfer operator) and
it is shown in \cite{MUpreprint3} that there exists a transition parameter $\Th \geq 1$ such that the series giving the transfer operator
 with parameter $t$ is divergent if $t<\Th$ and convergent, even a bounded operator, if $t>\Th$. Moreover,  it allows us to get   precise estimates for the transfer operator and of the transition parameter
in terms of the fractal geometry 
of the singularity over infinity. 
Using them, we are able to construct entire functions for which the transfer operator at its transition parameter $t=\Th$ is convergent. We do this in fact  by constructing first a model function (Sections 3-6) and then (Sections 7-10) carry all the properties over to an entire function using 
the approximation method of Rempe in \cite{Rempe-HypDim2}.

\medskip

It turns out that our approach also answers several other open questions.
The first result  answers positively the question in Remark 3.7 in \cite{BKZ18}
(see Section \ref{s9} for the precise definitions of the notions in the following results such as topological pressure and conformal measure).


\bthm\label{theo main''}  For every $1<\Th<2$ there exists a disjoint type and finite order entire function  $ f\in\cB$
whose transfer operator has transition parameter $\Th$, such that the transfer operator is convergent at $\Th$
and such that the topological pressure at $t=\Th$ is  strictly negative. Consequently, the topological pressure 
of $f$ has no zero. 
\ethm

We also can complete the picture concerning the behavior of the hyperbolic dimension.
For an entire function $f$ having a tract of sufficiently nice geometry it is known that $HypDim(f)\geq \Th\geq 1$
where this time $\Th$ is a  transition parameter of $f$ restricted to this tract (see \cite{Mayer:2017ab}).
Moreover,  when $\Th =1$ then $HypDim(f)>1$ (this strict inequality has previously been obtained in full generality in \cite{BKZ-2009}). The functions in the present paper show that strict inequality between the hyperbolic dimension and the transition parameter is no longer true as soon as 
 $1<\Th <2$. The case $\Th=2$ was studied by Rempe-Gillen in \cite{Rempe-HypDim2} where a disjoint type function of finite order and with  hyperbolic dimension equal to two  was constructed.

\bthm\label{theo main'''} For every $1<\Th<2$ 
there exists a disjoint type  and finite order  entire function $ f\in\cB$ with a single quasidisk tract and whose hyperbolic dimension attains the minimal possible value $HypDim(f)=\Th$.
\ethm

Finally, the functions of Theorem \ref{theo main'''} also explain that hyperbolic, even disjoint type, entire functions can behave like the very flexible, since locally defined, irregular conformal iterated function systems (see \cite{MauldinUrb02}).

\bthm\label{theo main'}  For every $1<\Th<2$ there exists a disjoint type  and finite order entire function $f\in \cB$ 
such that $HypDim(f)=\Th $ and such that $f$ does not have a conformal measure
supported on its radial Julia set.
\ethm

\noindent
 {\em Acknowledgement:} The authors thank the anonymous referee whose remarks, comments, and suggestions allowed us to improve the final exposition of the paper.

\section{Preliminaries}
Let
$\D(z,r)$ be the open disk with center $z\in \C$ and radius $r>0$.
When the center is the origin, we also use the notation
$
\D_r:=\D(0,r)
$
 and then the complement of its closure will be denoted by
$$
\D_r^*=\C\sms \ov\D_r.
$$
 We also consider the half-spaces
$$ \cH_s=\big\{z\in \C \; , \;\; \Re {z} >s\big\} \quad , \quad s\geq 0 \, .$$
 When $s=0$, then we also write $\cH$ for $\cH_0$.

\subsection{Order and singularities}
Let  $f:\C\to\C$ be  an entire function.
 The order $\rho (f)$ of $f$ is defined by
\beq \label{order}
\rho (f) =\limsup_{|z|\to\infty} \frac{\log \log |f(z)|}{\log |z|}
\eeq
and $f$ is called of \emph{finite order} if $\rho (f) <\infty$.

 Iversen's classification of singularities is very well explained in \cite{BE08.1}.
An entire function $f$ can have two types of singular values: $b\in \C$ is a \emph{critical value} if $b=f(c)$ for some $c\in \C$ with $f'(c)=0$ and 
and $b\in \hat \C$ is an \emph{asymptotical value} if there exists a curve $\ga \subset \C$ tending to infinity and such that $f(z)\to b$
as $z\to\infty$, $z\in \ga$. In this case there exists for every $r>0$ an unbounded connected component $\Om_r$ of $f^{-1}(\D(b,r))$
such that $\Om_{r'}\subset \Om _r$ if $r'<r$ and $\bigcap_{r>0} \Om_r =\emptyset$. Such a choice of components is called singularity over $b$
and it is called \emph{logarithmic singularity} in the particular case when $f:\Om_r\to \D(b,r)\setminus \{b\}$ is a universal covering for some $r>0$.
The set of critical values and of finite asymptotic values of $f$
 will be denoted by $S(f)$.

We consider functions of the Eremenko--Lyubich class $\cB$ that consists of entire functions for which $S(f)$ is a bounded set. These functions are also called of \emph{bounded type. 
}
If $f\in \cB$, then there exists $r>0$ such that $
S(f)\subset \D_r
$.
Then $f^{-1}(\D_r^*)$ consists of mutually disjoint unbounded Jordan domains $\Omega$ with real analytic boundaries such that $f:\Om\to \D^*_r$ is a covering map (see \cite{ EL92}). Thus, an entire function $f$ of class $\cB$
has only  logarithmic singularities over infinity. The connected components of $f^{-1}(\D^*_r)$ are called \emph{tracts} or, more precisely, \emph{logarithmic tracts} and 
the restriction of $f$ to any of these tracts $\Om$ has the special form 
\beq\label{Nov20 1}
\text{$f_{|\Om} =\exp\circ{\tau}\;\;$
where $\;\;\ph=\tau^{-1} :\cH_{\log r} \to \Om$}
\eeq
is a conformal map fixing infinity. In the following we  use for every $s\geq 0$ the notation
 $\Om_s= \ph (\cH_s )$
 so that, in particular,
 $$\Om=\Om_{\log r}=\ph (\cH_{\log r}) \quad \text{and}\quad \Om_0=\ph (\cH _0)=\ph (\cH).$$

 \noindent
In this work we construct entire functions of class $\cB$ having just one particular tract $\Om$.


\subsection{Model functions} 

Besides globally defined entire functions we also consider holomorphic functions that are only defined in
a unbounded simply connected domain $\Om$ where it has the form \eqref{Nov20 1}. Such functions are called
models and the following is a simple version of the general definition,
see Bishop \cite{Bishop-EL-2015, Bishop-S-2016}.

\bdfn\label{tract model}
A model is a holomorphic map $$f=e^\tau :\Om \to \D_r^*= \{ |z| >r\}$$
where $\Om$ is a simply connected unbounded domain, called tract,
such that $\partial\Om$ is a connected subset of $\C$ where $r\geq 1$ and where
$\tau : \Tract \to \cH_{\log r}$ is a conformal map fixing infinity:
$$\text{$\tau (z)\to \infty\;\;$ if $\;\;z\to \infty$.}$$
\edfn

 In general, a model function can clearly not be extended to an entire function but it can be approximated
by entire functions in various ways (see \cite{Bishop-EL-2015, Bishop-S-2016, Rempe-HypDim2}).
Since we use intensively  \cite{Rempe-HypDim2} we provide all necessary properties of this approximation in  Section \ref{s7}.

\smallskip

\subsection{Dynamical preliminaries}
All relevant informations on the dynamics of transcendental functions can be found in Bergweiler's
 survey \cite{Bergweiler-survey}. As for rational functions,
\emph{the Fatou} set $\fat _f$ of an entire function $f$ is the set of points of the plane that admit a neighborhood 
on which the iterates $f^n$, $n\geq 1$, are normal with respect to the spherical metric.
The complement 
$\jul _f= \C\setminus \fat _f$ is the \emph{Julia set} of $f$.

An entire function $f:\C\to\C$ is called \emph{hyperbolic} if  there is a compact set $K$ such that 
$$
f(K)\subset \Int(K)
$$ 
and $f:f^{-1}(\C\setminus K)\to \C\setminus K$ is a covering map. 
This extends naturally the notion of hyperbolicity of the rational to the transcendental case since, according
to Theorem 1.3 in 
\cite{RempeSixsmith16}, an entire function $f$ is hyperbolic if and only if the postsingular set 
$$
\P(f):= \overline { \bigcup_{n\geq 0} f^n (S(f))}
$$
is a compact subset of the Fatou set of $f$.
Clearly, a hyperbolic function belongs to class $\cB$.

\emph{Disjoint type} functions are particular hyperbolic functions. This notion first implicitly appeared in \cite{B07} and means that
the compact set $K$ in the definition of a hyperbolic function can be taken to be connected. In this case, the Fatou set of $f$ is connected. For example, if $f\in \cB$ and if there exists $\cD$ a simply connected bounded domain such that 
\beq\label{old.1}
S(f)\subset \cD \quad  \  \text{and} \quad  f^{-1}(\C\setminus \ov\cD) \cap \ov \cD=\emptyset\ 
\eeq
then $f$ is of disjoint type. A particular case for the domain $\cD$ is a disk centered at the origin.

\smallskip

A model $f :\Om \to \D_r^*$ is said to be of disjoint type if $\overline \Om \subset \D_r^*$ and, in this case, the Julia set 
is $$\J_f =\{ z\in \Om \; ; \;\; f^n(z)\in \Om \; \text{for all}\; n\geq 1\}.$$
This is consistent with the above definition of the Julia set for disjoint type entire functions.

\smallskip

Concerning the radial Julia set, there are several definitions in the literature (see \cite{MUmemoirs, Rempe09-hyp}).
It is explained in Remark 4.1 of \cite{MUmemoirs} that these definitions lead to different sets whose difference  is dynamically insignificant. In particular they 
have same Hausdorff dimension. Since we deal only with hyperbolic, in fact disjoint type, entire or model functions, the following definition
fits best to our context:
$$\J_r(f) = \{z\in \J(f) \;: \; \liminf_{n\to\infty} | f^n(z) | <\infty\}\,$$
and clearly this is a Borel set.
According to \cite{Rempe09-hyp}, the Hausdorff dimension of this set equals the \emph{hyperbolic dimension} of $f$. 
$$HypDim(f)= Hdim ( \J_r (f))\,.$$
Falconer's book \cite{Falconer} contains all relevant informations on fractal dimensions. One can find the definition of Hausdorff dimension in Section 3 and the one of Minkowski dimension in Section 2.

\smallskip

We will consider an analytic family of maps of the form
$f_\l = \l \, f$, $\l \in \C^*,$
where $f\in \cB$ is a given entire function.

\bdfn\label{dfn component} The functions $f_{\l_1}, f_{\l_2}$ belong to the same hyperbolic component of $\C^*$ if there exists
a simply connected domain $V\subset \C^*$ that contains  $\l_1,\l_2$ and such that
\ben
\item all the functions $f_\l$, $\l\in V$, are hyperbolic and 
\item the functions $f_\l$, $\l\in V$, are $J$--stable in the sense of holomorphic motions:  there exists a base point 
$\l_0\in V$ and a holomorphic motion $(\ph_\l)_{\l\in V}$ identifying the Julia sets $\ph_\l (\J_{\l_0})=\J_\l$ and conjugating the dynamics on the Julia sets, i.e.
for  $\l\in V$ we have $\ph_\l \circ f_{\l_0} = f_\l \circ \ph_\l$ on $\J_{\l_0}$.
\een 
\edfn

See \cite{MSS83} for the notion of holomorphic motions and of $J$--stability in the setting of analytic families of rational functions.

\section{Models with snowflake tract}\label{s2}

The restriction of an entire function to a logarithmic tract is an example of a model function. The approach here goes the opposite way. We first construct explicit model functions and then, later in Section \ref{s7}, use the  uniform approximation  of  Rempe \cite{Rempe-HypDim2} in order to get entire functions having the same required properties.

According to Definition \ref{tract model}, it suffices to indicate conformal maps $\tau : \Om \to \cH_{\log r}$ or, equivalently, the inverses
$\ph :\cH_{\log r} \to \Om$ in order to define appropriate model functions. Since we will employ the approximation \cite{Rempe-HypDim2}, it is
necessary to define $\ph$ on a larger domain $\hat \cH \supset \cH\supset \cH_{\log r}$ which will be called \emph{extended half plane}. Then, $\Om$ is the tract of the model $f$ and  the larger domain $\hat \Om= \ph (\hat \cH)\supset \Om$ will be called
\emph{extended tract}.

We start by defining the extended half plane $\hat \cH$.
Let  $\sg (t)= -14\log |t| -7$ for $|t|>1$ and extend this function to 
an even and
 $C^\infty$-smooth function $\sg:\R\to (-\infty, 0]$
such that 
$$\sg (0)=0 \quad \text{and} \quad -7\leq \Re (\sg (t))\leq 0 \quad \text{for}\quad |t|\leq 1.$$ 
Consider then
\beq \label{39}
\hat \cH =\{ z=x+iy \; , \;\; x> \sg (y)    \}\supset \cH = \cH_0\,.
\eeq
This domain is a regularized version of the domain used in \cite{Rempe-HypDim2}.
Let $h:\hat \cH \to \cH$ be the conformal map fixing the origin and infinity and such that $h'(\infty)=1$
(see Appendix \ref{appendix} for the existence of $h'(\infty)$). Notice that the symmetry of the graph of $\sg$
implies that $\overline h(z)=h(\ov z)$, $z\in \hat \cH$.

Let $\ph =\tau^{-1} : \hat \cH \to \hat \Om$ be a conformal map fixing the origin and infinity and
 set $\psi =\ph \circ h^{-1}$. We then have the following diagram.
 
 \medskip \smallskip
 
 \begin{center}
 \begin{tikzcd}
  \hat \cH \arrow[r, "\varphi"] \arrow[dd, "h"]&  \hat \Om \\
  &\cup\\
\quad  \qquad\;\;\;  \cH \supset  \cH_{\log r} \arrow[uur, "\psi"]  \arrow[r, "\varphi"] &\Om\\
  \end{tikzcd}
  \end{center}  
\vspace{-0.3cm}
  So,  $\Omega=\ph (\mathcal H_{\log r} )$ does depend on $r$. This number will be taken 
 $r\geq e^2$ and will  be defined in \eqref{x.9}.
   Since we will always have $2\pi i \in \partial \hat \Om$, we can make the additional normalization $\psi (i)=2\pi i$ so that,
all in all,  the conformal map
 $\ph:\hat \cH \to \hat \Om$ is normalized by
 \beq\label{44}
 \ph (0)=0\; , \; \ph(\infty ) =\infty \quad \text{and} \quad \ph (h^{-1}(i)) = 2\pi i\,.
 \eeq

From the particular form of $\hat \cH$ follows that $h$ behaves almost like the identity near infinity. We have collected all required properties of this map in Appendix \ref{appendix}. 

\smallskip


We now define appropriately the domains $\hat \Om$. 
We recall that in this way we also define the domains $\Om = \ph (\cH_{\log r})$, the maps 
$ \tau=\ph^{-1}$ and the model functions $f=e^\tau : \Om \to \D_r^*$ and these are  in fact defined on the larger extended tract $\hat \Om \supset \Om$.

\smallskip

We now describe the particular construction of  the extended tracts $\hat \Om$.
It is based on a modification of a standard snowflake arc $\ga$ attached at the endpoints 
$2\pi i$ and $4\pi i$ and with  the $4^n$ intervals of the $n$-th approximation of length $\lmm_n$ defined as follows. 
Let
\beq\label{1}
\frac 14 < \rho_{min} < \frac{1}{3} < \frac{1}{e} < \rho_{\max } < \frac 12\, ,
\eeq
let $\rho_n\in [\rho_{min}, \rho_{max} ]$, $n\geq 1$, and define then inductively 
$\lmm_n=\rho_n\lmm_{n-1}$, $n\geq 1$, with $\lmm_0=2\pi$. 
If $\rho_n =\frac 13$ for all $n\geq 1$ then $\ga $ is a standard snowflake arc with dimension
$\Theta =\frac{\log 4}{\log 3} $.
The domain $\hat \Om$ is the connected component of the complement  of the curve
\beq\label{quasicircles}
\Ga = \{0\}\cup \bigcup_{n\in \Z} 2^n (-\ga \cup \ga ) 
\eeq
containing the half-line $[10, \infty)$. 
By construction, $0, 2\pi i\in \Ga =\partial \hat \Om$.

Now, with $\tau =\ph^{-1}$ as introduced above and since $\Om=\ph (\cH_{\log r})\subset \ph (\hat \cH)=\hat \Om$,
we get the associated model map 
\beq\label{2}
{ f} =e^{\tau} : \Om \to \D^*_{r} \,.
\eeq
By construction, $ f$ is defined on the larger domain $\hat \Om$. On the other hand, this function is not of disjoint type. 
In order to remedy this it suffices to translate 
the  curve $\Ga$ 
so that, after translation,  $\overline {\hat \Om} \cap \overline \D_{r} =\emptyset$. 
Of course, this implies that $\overline  \Om \cap \overline \D_{r} =\emptyset$ since $\Om \subset \hat \Om$.
Such a disjoint type model is given by
\beq\label{10}
{\bf f}(z) = { f} (z -T)\quad , \quad z-T\in \hat \Om\,.
\eeq
where the precise value of $T$ will be fixed in Section \ref{s7}.

\medskip

In general,
$\Th=\Th_f$ will be the transition parameter of the transfer operator as introduced in \cite{MUpreprint3}
but each time we deal with one of the following examples we will have $\Th =\Th_f=\frac{\log 4}{\log 3}$. 

\bexam \label{exam 1}
Let $\al >1$ and choose the numbers $\rho_n $ such that
\beq \label{lm}
\frac{1}{C_f} \leq n^\al 
 4^n \, \lmm_n^\Theta  \leq C_f\quad , \quad n\geq 1\,,
 \eeq
for some constant $C_f>1$.
\eexam

\brem
Although this will not be used, it is helpful to have in mind that \eqref{lm} allows to show that the Minkowski  dimension of $\ga$ is $\Th =\frac{\log 4}{\log 3}$. 
\erem

\bexam \label{exam 2}
Let $N>1$ and let  $\al > 1$. Set 
$$\rho_n=\frac 1e \quad \text{for}\quad 1\leq n\leq N$$
 and let $\rho_n\in [\rho_{\min } , \frac 13]$ for $n>N$
 such that \eqref{lm} holds for some constant $C_f>1$.
\eexam

\noindent
Clearly, the domains coming from Example \ref{exam 2} are special cases of those in Example \ref{exam 1}.

\

Standard references on quasiconformal mappings are \cite{Ahl06,   AIM-2009, LV-1973}
An important feature is that a $K$--quasiconformal map $\ph : \C\to \C$ is $K'$--quasisymmetric,
with $K'=K'(K)$ depending on $K$ only,
which means that 
$$
|\ph (z_1)-\ph(z_2)|\leq K' |\ph (z_1)-\ph(z_3)| \quad \text{for every} \quad |z_1-z_2|\leq |z_1-z_3|\,.
$$
Moreover, $\ph$ has good H\"older continuity properties:
there are constants $0<c_1\leq  c_2$   such that 
\beq\label{5}
c_1|z_1-z_2|^{K}\leq |\ph (z_1)-\ph (z_2)|\leq c_2 |z_1-z_2|^{1/K} \; \text{ for all} \; z_1,z_2\in \D (0, 2 )
\eeq
and, if $\ph(0)=0$, then
\beq\label{5'}
\D \big(0, c_1 R^{1/K}\big)\subset \ph \big(\D(0,R)\big) \subset \D \big(0, c_2 R^{K}\big)\; \text{ for all} \; R>1 .
\eeq
The first inequality \eqref{5} is mainly Mori's Theorem, for a precise version see Theorem II.4.3 in \cite{LV-1973}.
One can find inequality \eqref{5'}  in \cite[Theorem 3.2]{Martin-1997}.

All these constants do depend quantitatively on each other. In particular, if we deal with a family of uniformly quasiconformal
mappings of the plane, meaning that they are all $K$--quasiconformal for some fixed constant $K$,
and if the maps are normalized, for example by the requirement that \eqref{44} holds,
then the quasisymmetric and the H\"older constants $c_1,c_2$ are also uniform.

\smallskip

A \emph{quasicircle} is the image of a circle or a line by a quasiconformal map of the plane. We only consider unbounded 
quasicircles. Such a curve $\Ga$ is characterized by the important Ahlfors $3$--point condition: there exists $c>0$ such that
$$
|z_1-z_2|\leq c |z_1-z_3| \quad \text{for all}\quad z_1,z_3\in \Ga \; \text{ and }\; z_2\in \Ga (z_1,z_3)
$$
where $\Ga (z_1,z_3)$ is the subarc of $\Ga$ with endpoints $z_1, z_3$. It is a well known fact that all the curves 
defined in \eqref{quasicircles} satisfy this condition uniformly and have uniform quasisymmetric parametrization
(for a proof, see for example Lemma 3.1 in \cite{Rohde-2001}):

\bfact \label{fact quasicircle}
There are constants $c,K'$ depending on $\rho_{max}$ only such that for all choices of $\rho_n\in [\rho_{min}, \rho_{max}]$
the curve $\Ga$ defined in \eqref{quasicircles} satisfies the Ahlfors $3$--point condition with constant $c$ and the natural parametrization of $\Ga$ is a $K'$--quasisymmetry.
\efact

\noindent
Inhere we call \emph{natural parametrization} the map $g:\R\to \Ga$ defined as follows. 
If 
$m\geq 0$ and if $\ga_m$ is the $m$-th approximation of $\ga$ then $\ga_m$ is the union of $4^m$
intervals $I_{m,l}=[a_{m,l}, a_{m,l+1}]$, $l=0,..., 4^m-1$, and we can define $g: [i/2, i]\to \ga$ 
as the continuous extension of the map defined 
by
\beq\label{natural map}
a_{m,l}=g\Big(\frac i2 (1+l/4^m)\Big)
\quad \text{for every} \quad m\geq 0 \text{ and } 0\leq l \leq 4^m\,.
\eeq
The natural parametrization of $\Ga$ will be the unique extension to $i\R$ of this map that satisfies the two relations $g\circ 2=2\circ g$ and $g(-z)=-g(z)$, $z\in i\R$.

The quasicircles  $\Ga$ given by the Examples \eqref{exam 1} and \eqref{exam 2} admit uniformly quasiconformal reflections which allows to show that the corresponding conformal maps 
$\ph:\hat \cH\to \hat \Om$ have normalized and uniformly quasiconformal extension to the plane. 
Also, there are several extensions, such as the one based on the Beurling-Ahlfors extension \cite{BeurlingAhlfors1956}, of a quasisymmetric parametrization of a quasicircle to a quasiconformal map of the plane with control of the constant. Consequently, 
the family of natural parameterizations of the curves $\Ga$
extend to a family of normalized and uniformly quasiconformal mappings of the plane.

\medskip

As a first direct consequence of these properties along with the normalisation \eqref{44}
 we have the following:
 
 \brem \label{normality f} The family of all conformal maps $\ph:\hat{\mathcal H}\to\hat\Omega$ normalized by \eqref{44} (as well as $\psi:\mathcal H\to\hat\Omega$) 
 corresponding to all possible choices of 
 $\rho_n\in [\rho_{min}, \rho_{max} ]$, $n\geq 1$, is a normal family and each limit of a
 convergent sequence of these maps is again a non-constant conformal map.
  \erem
  
  Indeed, the maps $\ph$ are normalized by \eqref{44} and they have uniformly quasiconformal extension to the plane.
 The statement in Remark \ref{normality f} is thus a standard fact for families of normalized uniformly quasiconformal maps
 and, consequently,  Remark \ref{normality f}    not only applies to the conformal maps $\ph, \psi$ but also to their quasiconformal extensions whose convergent subsequences converge uniformly on every compact subset of the plane.
   
 The normality behavior of these maps allows to precise that
  the conformal map $\psi$ reflects the self-similarity of the curve $\Ga$:

\blem\label{4}
If $i\mu= \psi^{-1} (2\, \psi (i)) = \psi^{-1} (4\pi i)$ then  $$\psi \circ \mu = 2 \circ \psi$$ and there exists $1< \mu_{min}\leq \mu_{max} <\infty$
such that for every choice of  $\rho_n\in [\rho_{min}, \rho_{max} ]$, $n\geq 1$, we have
$\mu_{min}\leq \mu\leq \mu_{max}$ .
\elem

\bpf
Since, by construction, $2\hat\Om =\hat \Om$
the map $\psi^{-1} \circ 2\circ \psi $ is a conformal self-map of $\cH$ fixing the origin and infinity
which immediately implies the validity of the functional relation for some real $\mu>1$. 

Set $\mu_{min}=\inf \mu$ where the infimum is taken over all curves $\Ga$ as defined in \eqref{1} and \eqref{quasicircles}.
If $\mu_{min}=1$ then, for every $k\geq 1$, there exists  $\rho_n\in [\rho_{min}, \rho_{max} ]$, $n\geq 1$, such that the associated conformal map $\psi _k$ satisfies the above functional relation with number $\mu_k\in ]1, 1+\frac 1k[$.
We may assume that $\psi_k$ is a converging sequence. Let $\psi$ be the non-constant limit
conformal map of this sequence. Then  $\mu= \frac 1i \psi^{-1} (2\, \psi (i)) >1$ since $ \psi^{-1} ( \psi (i))=i$. 
Finiteness of $\mu_{max}=\sup \mu$ can be shown by a similar normal family argument.
\epf
  
\medskip

A second direct consequence of Remark \ref{normality f} is that the  number $r\geq e^2$ in the definition of the tract $\Om$ can be defined 
such that 
\beq\label{x.9}
\Om=\ph (\cH_{\log r} ) \subset \D_4^* 
\eeq
for all conformal maps $\ph$ of this Section.
We always assume that this is the case.

\medskip

We use several standard notations 
such as  the symbols $A \preceq B$ and $A \asymp B$ which mean that  the ratio $A / B$ is bounded above respectively bounded above and below by constants that do not depend on the particular choice of the numbers
$\rho_n$, some  of them will depend on $\rho_{min} , \rho_{max}$ and parameters like $r$ above. But these are fixed and the same for all models of this paper. In other words, all constants will be uniform
for the family of quasidisks $\Om$, of conformal maps $\ph$ and of models $f$ we consider.

Throughout the text when we refer to \emph{all models of Section \ref{s2}} then this 
refers to the models from Example \ref{exam 1} and Example \ref{exam 2} with fixed numbers $\rho_{min} , \rho_{max}$
according to \eqref{1}.

\section{Estimating the transfer operator}\label{s3}

 The paper  \cite{MUpreprint3} contains a complete treatment of the thermodynamic formalism of disjoint type models and functions.
 We now collect some properties of the central tool of this theory, the transfer operator. In order to do so, we consider a model $f=e^{\tau}:\Om \to \D_r^*$. Typically, $f$ is one of the examples of the previous section but it can also be the restriction of a convenient  entire function to its logarithmic tract.

In the sequel we will work with a particular Riemannian, in fact the cylindrical, metric 
$|dz|/|z|$.
The derivative of a holomorphic function $h$ calculated with respect to this metric  at a point $z$ such that $h(z)\neq 0$  is denoted by $|h'(z)|_1$ and is given by the formula
\beq \label{6'}
|h'(z)|_1=|h'(z)|\frac{|z|}{|h(z)|}.
\eeq
 Given a real number $t\geq 0$, we define the transfer operator $\pft$ by the usual formula:
\beq\label{6}
\pft g (w):= \sum_{f(z)=w} |f'(z)|^{-t}_1 g(z) \quad \text{for every}\quad w\in \D_{r}^*
\eeq
where $g$ is any function in $\cC_b (\ov\Om)$, the vector space of all continuous bounded functions defined on $\Om$. The norm on this space, making it a Banach space, will be the usual sup-norm $\|\cdot\|_\infty$.
 Note that if $w\in\D_{r}^*$, then $f^{-1}(w)\sbt\Om$ and, by the disjoint type assumption, $\Om \subset \D_{r}^*$.
 Thus $|f'(z)|_1$ is well defined for all $z\in f^{-1}(w)$ and, in consequence, $\pft g (w)$ is well defined for all
 $w\in \D_{r}^*$ provided the series is convergent. Since we work with quasidisk tracts the whole scope of  \cite{MUpreprint3} applies and we know in particular that there is a number $\Th=\Th_f \in [1, 2]$, called transition parameter, such that  the series defining  $\pft$ is convergent if $t> \Th$ and diverges if $t<\Th$.  
 \bdfn \label{7}
 The function $f$ is of convergence type if the series defining $\pft$ converges for $t=\Th$.
 \edfn

The reader should have in mind the following fact (which is a very particular case of Theorem 4.1 in \cite{MUpreprint3}):

\bthm \label{MU Thm 4.1}
Let $f$ be a model or an entire function of class $\cB$ having one (or finitely many) tracts all of them being
 quasidisks.
Assume that $f$ is of disjoint type. Let $t>0$ and suppose that there exists $w_0\in \D_{r}^*$ 
 such that
$\pft \1 (w_0) < \infty$. Then the series defining $\pft \1$ is uniformly convergent meaning that 
$\pft$ is a bounded operator of  the space $\cC_b (\ov\Om)$.
\ethm

\subsection{Integral means}

The transition parameter $\Theta$ is precisely determined by the geometry of the boundary of the tract $\Om$ near infinity.
For this one considers rescalings of the conformal map $\ph$ given by
\beq\label{8}
\ph_T:=\frac{1}{|\ph(T)|} \, \ph \circ T \; \; , \;\; T\geq 1 \; .
\eeq
The map $\ph_T$ is defined on $ T^{-1}\hat  \cH$. In particular, all the maps $\ph_T$, $T\geq 1$, are defined on the half space $\cH$.

Let us consider again one of the examples introduced in Section \ref{s2}.
By self-similarity of the tracts and in view of Lemma \ref{4}, it suffices to consider only values $T=\mu^n$, $n\geq0$.
Considering now integral means of the rescalings of $\ph$ we get the required information about the  geometry of the boundary of the tract $\Om$ near infinity: 
\beq\label{9}
\b _\infty (t) =\limsup_{n\to\infty} \frac{\log \int _{\mu^{-1}\leq |y| \leq 1} |\ph _{\mu ^n}' (\mu^{-n} +iy)|^t dy}{n\log \mu}
\quad , \quad t\geq 0 \,.
\eeq

Of particular importance is the function $t\mapsto b_\infty(t)=\b_\infty (t)-t+1$. Following \cite{MUpreprint3},
this function always has a smallest zero $ \Th_f> 0$ and, in the good cases, it has a unique zero and is negative in $(\Th_f , \infty )$. In this latter case,  the function $f$ is said to have \emph{negative spectrum} and then $\Th_f$ is the transition parameter of the transfer operator
(see Proposition 5.6 and Theorem 4.4 in  \cite{MUpreprint3}).

In our case,  $\Om$ is a  quasidisk and functions with quasidisk tracts have negative spectrum (see Section 5 of \cite{MUpreprint3}).
This can be compared to the classical case of a conformal map of the unit disk onto a bounded quasidisk and with $\b_\infty$ the standard integral means function. 
There, Pommerenke has shown that $b_\infty$ has a unique zero which 
 is the Minkowski dimension of the boundary of the quasidisk (see Corollary 10.18 in \cite{PommerenkeBook}). 
 We will show in Theorem \ref{32} that the transition parameter of the transfer operator for our examples is $\Theta=\frac{\log 4}{\log 3}$, hence the Minkowski dimension
of the snowflake curves in Section \ref{s2}.
This means that the models or entire functions we deal with in the present paper have
negative spectrum and their disjoint type versions are in the class $\cD$ defined in \cite{MUpreprint3}. So the whole scope of that paper
 applies.

\subsection{The transfer operator of the models}\label{section 4.3}
We now come back to the models introduced in Section \ref{s2} and give precise estimates for the transfer operator of these models. The first step, which expresses $\pft\1(w)$ as an integral, follows Section 4 of \cite{MUpreprint3} and so we can allow us to present only the essential steps.

Let $f$ be one of the models introduced in Section \ref{s2}. 
Let $w\in \D^*_{r}$ and set $x=\log |w| > \log r\geq 2$. Fix also 
\beq\label{11}
\text{$j\geq 1\;\; $ maximal such that $\;\;\mu^{j-1}\leq x$.}
\eeq
For $z\in f^{-1}(w)\in \Om$ we have
$
|f'(z)|_1 = \left| \frac{\ph (\xi)}{\ph'(\xi)}\right|
$ where $\xi=\ph^{-1} (z)$. Hence,
$$
\pft \1 (w) = \sum_{\xi\in\exp^{-1}(w)} \left| \frac{\ph' (\xi)}{\ph(\xi)}\right|^t = \sum_{\xi\in\exp^{-1}(w)} \left| (\log \ph)'(\xi)\right|^t 
$$
and thus, using bounded distortion,
\begin{align*}
\pft \1 (w)& \asymp \int _\R  \left| (\log \ph)'(x+iy)\right|^t dy\\
&=  \int _{-\mu^j\leq y \leq \mu^j}  \left| \frac{\ph'}{\ph} (x+iy)\right|^t dy +\sum_{n\geq 1}
\int _{I_{n+j}}  \left| \frac{\ph'}{\ph}(x+iy)\right|^t dy
\end{align*}
where  $I_m= [-\mu^m , -\mu^{m-1}]\cup[\mu^{m-1}, \mu^m]$. The first term can be estimated as follows.
By quasisymmetry of $\ph$, since $\ph(0)=0$ and since $|y|\leq \mu^j \asymp x$ we have 
$$|\ph (x+iy)|\asymp diam \Big(\ph\big(\D (x+iy , \frac x2 )\big)\Big)\,.$$
On the other hand, $diam (\ph(\D (x+iy , \frac x2) )) \asymp |\ph ' (x+iy)|x$ because of bounded distortion. Consequently,
$$
\int _{-\mu^j\leq y \leq \mu^j}  \left| \frac{\ph'}{\ph} (x+iy)\right|^t dy  \asymp x^{1-t}\,.
$$
The integrals over $I_{n+j}$ can be estimated using the rescalings $\ph _T$ introduced in \eqref{8}
with $T={\mu^{n+j}}$ where, we recall, $j$ comes from \eqref{11}.
Again quasisymmetry of $\ph$ and the fact that $\varphi(0)=0$  show that $|\ph (x+iy)|\asymp |\ph( \mu ^{n+j})|$, $y\in I_{n+j}$.
It thus follows from a simple change of variable combined with bounded distortion that
\begin{align*}
\int _{I_{n+j}}  \left| \frac{\ph'}{\ph}(x+iy)\right|^t dy=&
 \int _{I_{0}}
\frac{|\ph'(\mu^{n+j}(   \mu^{-n-j}x+i u))|^t}{|\ph( \mu ^{n+j})|^t} \mu^{n+j}du \\
\asymp &
\int _{I_{0}}
\frac{|\ph'(\mu^{n+j}(   \mu^{-n}+i u))|^t}{|\ph( \mu ^{n+j})|^t} \mu^{n+j}du .
\end{align*}
Since $\ph_T' = \frac{T}{|\ph (T)|}\, \ph' \circ T$ we get, taking $T= \mu^{n+j}$,
$$
\int _{I_{n+j}}  \left| \frac{\ph'}{\ph}(x+iy)\right|^t dy\asymp
(\mu^{n+j})^{1-t} \int _{I_0}|\ph'_{\mu^{n+j}}(\mu^{-n}+iy)|^tdy.
$$
Let $r_n = r_n(x) = (n+j+\mu^j)\mu^{-(n+j)}\asymp \mu^{-(n+j)}dist(x+iy, \partial \hat \cH )$, $y\in I_{j+n}$. 
An elementary calculation shows that
 \beq \label{19} 
\mu^{-n}\leq r_n \preceq n \mu^{-n} \quad , \quad n\geq 1\,.
\eeq

\medskip

\noindent
Choose a maximal number of points $y_{n,k}$ in
$\mu^{-n}+ i  I_0$ such that $$sign (\Im (y_{n,k} ) )= sign (k)$$ and such that two consecutive points have distance $r_n$. 
Then,
\begin{align*}
\int _{I_{n+j}}  \left| \frac{\ph'}{\ph}(x+iy)\right|^t dy&\asymp
(\mu^{n+j})^{1-t} \sum_k|\ph'_{\mu^{n+j}}(y_{n,k})|^t r_n\\
&\asymp (n+j+x)^{1-t} \sum_k \left( |\ph'_{\mu^{n+j}}(y_{n,k}) |r_n\right)^t \,.
\end{align*}
Finally, it is convenient to replace $\ph$ by $\psi = \ph \circ h^{-1}$. We have
$$
\ph_{\mu^m}=\frac{1}{|\ph (\mu^m )|}\ph \circ \mu^m=\frac{2^m}{|\ph (\mu^m )|} \left( \frac{1}{2^m}\psi \circ \mu^m\right)
\left(\mu^{-m}\circ h \circ \mu^m \right).
$$ 
The first factor is approximately equal to $1$ since $\ph$ is a quasisymmetry with $\ph(0)=0 $, since
$\mu^m\asymp |h^{-1} (i\mu^m)|$ and since 
$\ph (h^{-1} (i\mu^m))= \psi (i\mu^m)=2^m \psi(i)=2^m 2\pi i$ by Lemma \ref{4}. The same lemma implies that
the second term equals $\psi $ and in order to estimate the last factor we introduce $$h_m=\mu^{-m}\circ h \circ \mu^m.$$ It follows from Proposition \ref{41} in Appendix \ref{appendix}  that $|h_m'|\asymp 1$. Therefore,
\beq\label{12}
|\ph' _{\mu^m}|\asymp  |\psi ' \circ h_m| 
\eeq
and, injecting this in the above expression, we get
$$
\int _{I_{n+j}}  \left| \frac{\ph'}{\ph}(x+iy)\right|^t dy\asymp 
(n+j+x)^{1-t} \sum_k \left( |\psi' (h_{n+j}(y_{n,k})) |r_n\right)^t 
$$
Set $z_{n,k}= h_{n+j}(y_{n,k})$. We will see in Lemma \ref{15} below that $dist (z_{n,k}, \partial\mathcal H)\asymp r_n$.  We get all in all
\beq \label{14}
\pft \1 (w) \asymp x^{1-t} \left[1+ \sum_{n\geq 1}  \left\{\Big(1+\frac{n+j}{x}\Big)^{1-t} \sum_k \left( |\psi' (z_{n,k}) |r_n\right)^t  \right\} \right]
\eeq
for all $w\in \D^*_{r}$.
The factor $ |\psi' (z_{n,k}) |r_n$ has obvious geometric meaning.  Indeed, assume that $Q_{n,k}\subset \cH$ is a rectangle  containing $z_{n,k}$, and such that 
\beq\label{26'}diam(Q_{n,k})\asymp r_n \quad \text{and } \quad dist(Q_{n,k},\partial \cH) \asymp r_n.\eeq
 Set 
\beq\label{26}
\cW_{n,k} = \psi (Q_{n,k}).
\eeq
 Then the following statement immediately follows from bounded distortion and \eqref{14}.

\bprop\label{13}
With the previous notations we have
$$
\pft \1 (w) \asymp ( \log |w| )^{1-t} \left[1+ \sum_{n\geq 1}  \left\{\Big(1+\frac{n+j}{x}\Big)^{1-t} \sum_k (diam \,\cW_{n,k})^t  \right\} \right]
$$
for all $w\in \D^*_r$ and with comparability constants uniform for all models of Section \ref{s2} but depending on the multiplicave constants in \eqref{26'}.
\eprop

In order to  exploit this we have to define properly the rectangles $Q_{n,k}$. We first need a technical result.

\blem\label{15}
There exists $\ka \geq 1$, independent of $n$ and $k$  such that the following properties hold:
\ben
\item \label{15.1} For  all $(n,k)$,
$$\frac{r_n}{\ka}\leq \Re (z_{n,k}) \leq \ka r_n \; .$$
\item \label{15.2} $\ka r_{n+1} > \frac{r_n}{\ka}$ for every $n\geq 1$ and $x > \log r \geq 2$.
\item \label{15.3} $|z_{n, k+1}-z_{n,k}|\leq\ka r_n$ for every $n\geq 1$ and $k>0$ and the analogue statement also holds if $k<0$.
\item \label{15.4}  Let $L= L_1$ be a common bilipschitz constant for the maps $h_m$, $m\geq 1$ (see Proposition \ref{41} in  Appendix \ref{appendix}).  Then
$$ \frac{1}{L\mu_{max}}  \leq|\Im z_{n,k}| \leq L \;\;\text{ for all $(n,k)$.}$$
\een
\elem

Given \eqref{15.4}, it is appropriate to define the values  
$$\text{$s_{imag}= \frac{1}{L\mu_{max}}$ and $S_{imag}=L$ .}$$

\bpf
For every $m\geq 1$, the map $h_m: \mu^{-m}\hat \cH\to  \mu^{-m}\cH =\cH$ is conformal and thus a hyperbolic isometry.
This implies that, for every $\xi \in  \mu^{-m}\hat \cH$,
$$
dist(\xi , \partial  \mu^{-m}\hat \cH) |h_m'(\xi )|\asymp dist ( \h_m(\xi ),\partial \cH) = \Re( \h_m(\xi )).
$$
Taking $\xi=y_{n,k}$ and $m=n+j$ we get
$$
\Re (z_{n,k}) \asymp dist(y_{n,k}, \partial  \mu^{-n-j}\hat \cH) |h_{n+j}'( y_{n,k})| \asymp r_n
$$
since, by Proposition \ref{41}, $ |h_{n+j}'( y_{n,k})| \asymp 1$. This shows Item \eqref{15.1}. 

Item \eqref{15.2} follows from the estimate
$$
\frac{r_n}{r_{n+1}}=\frac{(n+j+\mu^j)\mu^{-(n+j)}}{(n+1+j+\mu^j)\mu^{-(n+1+j)}}
= \mu \left( 1- \frac{1}{n+1+j+\mu^j}\right)\leq \mu\leq \mu_{max}
$$
with $\mu_{max}$ from Lemma \ref{4}.

Since the maps $h_m$ are bilipschitz uniformly with $m$, we have  the following: if $k_1,k_2$ have same sign then
 \beq\label{20} | z_{n,k_1}-z_{n,k_2}| \asymp | y_{n,k_1}-y_{n,k_2}| = |k_1-k_2|r_n\eeq
In particular, $|z_{n, k+1}-z_{n, k}| \asymp |y_{n, k+1}-y_{n, k}|=r_n$ which shows Item  \eqref{15.3}.

Finally, Item \eqref{15.4} follows from Lemma \ref{BL appendix}.
\epf

\medskip

\noindent
Let $\ka\geq  1$ be given by Lemma \ref{15}. This number being fixed, we can now define the rectangles $Q_{n,k}$
around $z_{n,k}$ as follows:
$$
Q_{n,k}=\Big\{\frac{r_n}{\ka}\leq \Re (\xi ) \leq \ka r_n   \text{ , }  |\Im (\xi-z_{n,k})|\leq \ka r_n\Big\}\cap \Big\{ s_{imag}\leq |\Im (\xi )| \leq  S_{imag}\Big\}.$$
Notice that \eqref{26'} is satisfied since $z_{n,k}\in Q_{n,k}$ by Item \eqref{15.1} and Item \eqref{15.4} of Lemma \ref{15}.

\medskip

\blem\label{16} Let $\ka\geq  1$ be given by Lemma \ref{15} and let $Q_{n,k}$ be defined as above. Then:
\ben
\item $\bigcup_{n,k}Q_{n,k} \subset \cU_{ext} =\Big\{0<\Re (\xi ) < S_{real} \; , \; \frac{s_{imag}}{2}< |\Im (\xi )| < 2 S_{imag}\Big\}$ where 
\begin{equation*}
  S_{real} =  2\ka \Big( 1+\frac{1}{\log \mu_{min}}\Big) .
\end{equation*}
\item There exist $\d_{real}>0$ and $\mu_{min}^{-1} < \d_{imag}^- <\d_{imag}^+ <1$ such that
$$\quad \bigcup_{n,k}Q_{n,k} \supset \cU_{int}= \Big\{0<\Re (\xi ) < \d_{real} \; , \;  \d_{imag}^-< |\Im (\xi )| <\d_{imag}^+\Big\}.$$ 
\een
\ben
\setcounter{enumi}{2}
\item The collection $\{ Q_{n,k}\}$ has bounded overlap: there exist $B\geq 1$ such that for every $(n_0,k_0)$ there exist at most $B$ indices $(n,k)$ such that $$Q_{n,k}\cap Q_{n_0,k_0}\neq \emptyset.$$
\een
 Again, all the involved constants are uniform. In particular, the sets $\cU_{int}$ and $ \cU_{ext}$ do not depend on the model $f$.
 \elem

\bpf An elementary calculation shows that $\sup_{a\geq 1} \frac{a}{\mu^a}\leq \frac{1}{\log \mu}\leq \frac{1}{\log \mu_{min}}$.
Combined with the definition of $r_n$ and of  $S_{real}$ we get for every $\xi \in \bigcup_{n,k}Q_{n,k} $ that $\Re {\xi }< S_{real}$.
Item (1) follows since the assertion concerning the imaginary part is obvious given the definition of the sets $Q_{n,k} $.

The second item can be shown as follows. Fix arbitrarily $\d_{imag}^- , \d_{imag}^+ $ such that $\mu_{min}^{-1} < \d_{imag}^- <\d_{imag}^+ <1$. By the definition of the points $y_{n,k}$ there exists $k_1,k_2>0$ such that 
$\Im (y_{n,k_1} ) < \mu^{-1}+r_n$ and $\Im (y_{n,k_2} ) > 1-r_n$. The bilipschitz property in Lemma \ref{BL appendix} implies thus that
$$
\Im (z_{n,k_1} ) \leq L_n (\mu^{-1}+r_n)\leq L_n(\mu_{min}^{-1}+r_n) \; \text{ and }\; 
\Im (z_{n,k_2} ) \geq \frac 1{L_n}(1-r_n). 
$$
Notice that $(L_m)_m$ is a decreasing sequence with limit $1$. On the other hand, $r_n\to 0$ and thus there exists $n_{min}$, which does not depend on the model $f$,  such that  
$$
L_{n}(\mu_{min}^{-1}+r_{n}) < \d_{imag}^-    \; \text{ and }\;  \frac 1{L_n} (1-r_n)>  \d_{imag}^+
\; \text{ for every }\; n\geq n_{min}.
$$
If we combine this with the definition of the sets $Q_{n,k}$ and Item \eqref{15.3} of Lemma \ref{15} then this gives
$$
V_n=\Big\{\frac{r_n}{\ka }< \Re (\xi ) < \ka r_n  \; , \;  \d_{imag}^-   < \Im (\xi )<  \d_{imag}^+ \Big\}
\subset  \bigcup_{n,k}Q_{n,k}
$$
for all $n\geq n_{min}$. 
Given \eqref{15.2} of Lemma \ref{15}, the set 
$$
\bigcup_{n\geq n_{min}}V_n
 $$
 covers $ \cU_{int}$ if we set $\d _{real} = \ka _{r_{n_{min}}}$.

\medskip

We are left to show that the collection $\{ Q_{n,k}\}$ has bounded overlap.
To start with, suppose that $ n < m$ and  $(n,k)$, $(m,l)$ are such that  $Q_{n,k}\cap Q_{m,l}\neq \emptyset$. Then 
necessarily $ \ka r_{m}\geq \frac{r_n}{\ka}$. But
$$
\frac{r_n}{r_m}= \frac{(n+j+\mu^j)\mu^{-n-j}}{(m+j+\mu^j)\mu^{-m-j}}
=\mu^{m-n}  \frac{n+(j+\mu^j)}{m+(j+\mu^j)}\geq \mu^{m-n}  \frac{n}{m}.
$$
Put $\De =m-n\geq 1$. Clearly $\frac n m =\frac n{n+\De}=\frac{1}{1+\De / n}\geq \frac 1{1+\De}\geq \frac1{2 \De}$
so that we get altogether the condition
$$\ka ^2 \geq \frac 12 \frac{ \mu ^\De}{\De}\geq \frac 12 \frac{ \mu_{min} ^\De}{\De} $$
which shows that there is a constant $B_1=B_1(\ka )$ such that $\De = m-n\leq B_1$.

Now, let $\xi\in \cH$, fix $n$ and 
consider  $k$ such that $\xi \in Q_{n,k}$. Then
$$|\xi - z_{n,k}|\leq diam ( Q_{n,k}) \leq \Big(\ka-\frac 1 \ka\Big)r_n+2\ka r_n = \Big(3\ka-\frac 1 \ka\Big)r_n.$$
 It follows from \eqref{20} that this can happen for at most 
$$B_2= 2\Big(3\ka-\frac 1 \ka \Big) L$$
indices $k$ where $L$ is the bilipschitz constant involved in  \eqref{20}. 

In conclusion, $\xi\in Q_{n,k}$ can happen for
at most $B_1$ different indices $n$ and, for every fixed $n\geq 1$, there are at most $B_2$ indices $k$ such that 
 $Q_{n,k}$ contains $\xi$. Therefore, the collection $\{ Q_{n,k}\}$ has bounded overlap with constant $B=B_1B_2$.
\epf

\section{Whitney decompositions}\label{s4}

In order to estimate the transfer operator via the sets $(\cW_{n,k} )$
 we will compare them to Whitney 
decompositions that reflect the geometry of the snowflake curve. 

Whitney coverings are standard. Here we use a slight modification of the usual notion.
The following definition applies to more general open sets $\cV$ but in this paper we will
take
$\cV = \psi (\cU)$
where $\cU$ is one of the sets  $\cU_{int}, \cU_{ext}$ of Lemma \ref{16}
and where $\Upsilon = \psi (\partial \cU \cap i\R)\subset \partial \cV\cap \partial \hat \Om= \partial \cV\cap \Ga$.

\bdfn \label{29}
A collection $(\wh_{m,l})$
 of sets is a  Whitney covering of $\cV$  with respect to $\Upsilon \subset \partial \cV$ if
 the following holds:
\ben
\item $\cV\subset \bigcup \wh_{m,l} $ and $\cV\cap  \wh_{m,l}\neq \emptyset$ for all $(m,l)$.
\item The sets $\wh_{m,l}$ have bounded overlap: there exists $\cB\geq 1$ such that
for every $(m_0,l_0)$ there exist at most $\cB$ indices $(m,l)$ such that $$ \wh_{m,l}  \cap \wh_{m_0,l_0} \neq\emptyset\,.$$
\item The sets $\wh_{m,l}$ are closures of Jordan domains, they are
uniformly round and of diameter comparable to the distance to the boundary. The later two conditions mean that 
 there exists $a>0$ and disks $\D(z_{m,l}, r_{m,l})$ such that the following holds for every $(m,l)$:
\beq\label{29Nov 2}
\D(z_{m,l}, r_{m,l})\subset \wh_{m,l}\subset \D(z_{m,l}, r_{m,l}/a)
\eeq
 and
\beq\label{29Nov 3}
a \, diam (\wh_{m,l}) \leq dist(\wh_{m,l}, \Upsilon) \leq \frac 1a diam (\wh_{m,l}) \,.
\eeq
\een
\edfn

\bfact\label{24}
The Whitney covering property is a conformal, even quasiconformal, invariant.
Indeed, quasiconformal mappings preserve the roundness condition \ref{29Nov 2}
(with new constant $a'$ depending on $a$ and on the quasiconformal constant only)
and \eqref{29Nov 3} is also preserved thanks to an estimate of Gehring and Osgood  \cite{GO79} for the quasihyperbolic distance
(see the explanation by Koskela in \cite[p.210]{QC and Analysis}).
\efact

\subsection{Geometric Whitney covering}
Let again $\hat \Om$ be one of the domains of Section \ref{s2}.
Consider now $g$ a quasiconformal map of the plane such that
$g(\cH )=\hat\Om$ and such that $g$ reflects the geometry of the snowflake curve $\Ga$.
It is a quasiconformal extension of the natural parametrization of $\Ga $ as explained in Section \ref{s2}
and it satisfies the relation \eqref{natural map}. We use this map to produce coverings of the sets
$$
\cV_{int} =\psi(\cU_{int}) \quad \text{and of} \quad \cV_{ext}=\psi(\cU_{ext}) .
$$
In the following, $\cV$ is one of the sets $\cV_{int}, \cV_{ext}$ and we
 recall from Lemma \ref{16} that $\cU_{int}, \cU_{ext}$ do not depend on the model, hence on $\Ga$.

Consider a standard decomposition of $\cH$ given by
$$
{\bf Q}_{m,l} =\Big\{4^{-(m+1)}\leq \Re \xi \leq 4^{-m}  , \;\; l\, 4^{-m} \leq \Im \xi \leq (l+1) 4^{-m} \Big\}
$$
and set
$$
\wh_{m,l} = g({\bf Q}_{m,l} ) \;\; , \;\;\; m ,  l\in \Z\,.
$$
By Fact \ref{24}, the collection of all $(\wh_{m,l} )$ such that $\wh_{m,l} \cap \cV \neq \emptyset$ is a Whitney covering of $\cV$ with respect to $\Upsilon= \psi (\partial \cU \cap i\R)$.
This covering reflects the geometry of the snowflake, as explained in Lemma~\ref{35} below. As always, the constants
in this result do not depend on the particular snowflake chosen out of the family described in Section \ref{s2}.

\blem\label{25} For every set $\wh_{m,l} $ of this Whitney covering of $\cV$ with respect to $\Upsilon$ we have
$diam \wh_{m,l} \asymp  \lm $, there exists $K\geq 1$ such that
\beq\label{25.1}
4^{-mK}\preceq diam \wh_{m,l} \preceq 4^{-m/K}
\eeq
and, for some $m_0\geq 1$, the number of sets $\wh_{m,l}$ of level $m\geq m_0$ is
\beq\label{25.2}
\# \{l\; , \;\wh_{m,l}\cap \cV \neq \emptyset\} \asymp 4^m
\eeq
where the involved equivalence constants do only depend on the set $\cV=\cV_{int}$ or $\cV=\cV_{ext}$ respectively.
\elem

\bpf
The relation  $diam \wh_{m,l} \asymp \lm $ follows from the fact that the quasiconformal map $g$ is quasisymmetric
and \eqref{25.1} is a consequence of the H\"older continuity  \eqref{5}. The statement concerning the number of sets of a given level $m$ is clear and the involved constants are independent of the model since the sets $\cU_{int}, \cU_{ext}$ do not depend on them.
\epf

\subsection{Conformal Whitney covering} The  covering $(\cW_{n,k}=\psi (Q_{n,k}))$ has been introduced in \eqref{26}.

\blem\label{27}
The sets $(\cW_{n,k})$, $(n,k)$ such that $Q_{n,k}\cap \cU_{int} \neq \emptyset$,
are a Whitney covering of $\cV_{int}=\psi(\cU_{int})$ with respect to $\Upsilon_{int}= \psi (\partial \cU_{int} \cap i\R)$. In addition, 
there exists $K\geq 1$ such that
\beq\label{23}
 r_n^K \preceq diam \cW_{n,k} \preceq r_n^{1/K}
\eeq
\elem

\bpf
By Fact \ref{24}, it suffices to verify that $(Q_{n,k})$
is a Whitney covering with respect to $ \partial \cU_{int}\cap i\R$. But this we already checked in Section \ref{s3} (see \eqref{26'} and Lemma \ref{16}).

It remains to justify the inequalities in \eqref{23}. But they follow from
$diam \; Q_{n,k} \asymp r_n$ and, again, from the H\"older property  \eqref{5}.
\epf

\subsection{Comparing the coverings} In view of estimating the series in Proposition \ref{13} we now compare the
geometric and conformal Whitney coverings.

\blem\label{28} There exists a constant $B_*$
such that for every $(n,k)$ (or $(m,l)$)  there are at most $B_*$  indices $(m,l)$ (respectively $(n,k)$) such that
\beq\label{30} \cW_{n,k} \cap \wh_{m,l} \neq \emptyset \,.\eeq
\elem

\bpf 
First of all, there exists $a>0$
 such that every set $\cW_{n,k}$ and $\wh_{m,l}$ contains respectively a ball $B_{n,k}$, 
${\bf B}_{m,l}$ of radius $a\, diam\cW_{n,k}$, $a\, diam \wh_{m,l}$ 
Again, this constant $a$ is independent of the model of Section \ref{s2} since, by uniform quasiconformality, the sets $ \cW_{n,k} , \wh_{m,l}$ are uniformly round. We recall that this means that the roundness condition \eqref{29Nov 2} is satisfied
for some fixed constant $a>0$.

Both coverings being Whitney, \eqref{30} implies $diam \cW_{n,k} \asymp diam \wh_{m,l}$. Therefore, there exists $A>1$
such that, whenever \eqref{30} holds,
$$
B_{n,k}\subset \cW_{n,k} \subset  \D(w_{m,l}, A \, diam \wh_{m,l})
$$
where $w_{m,l} \in \wh_{m,l}$ is any arbitrary point. 
The conclusion comes now from the bounded overlap property combined with a volume comparison argument. Clearly in this argument we can exchange the role of the two coverings and thus the proof is complete.
\epf

We also have to compare the levels $n$ and $m$ for sets $\cW_{n,k}$ and $\wh_{m,l}$ that intersect. This is not possible for general domains but here
we deal with quasidisks and have  good H\"older estimates.

\blem\label{31} There exists a constant $b>0$, still independent of the model,
such that for every $(n,k)$ and $(m,l)$ for which \eqref{30} holds we have
$$
b\, n \leq m \leq \frac 1b n \; .
$$
\elem

\bpf
Assume $(n,k)$ and $(m,l)$ are such that \eqref{30} holds. Then $diam\, \cW_{n,k}\asymp diam\, \wh_{m,l}$.
It follows from Lemma \ref{25} and from \eqref{23} that 
$$
4^{-mK} \preceq r_n^\frac{1}{K} \quad \text{and}\quad r_n ^K \preceq 4^{-\frac mK}\,.
$$
Concerning $r_n$, we use now the estimate \eqref{19}. Combined with the previous one it gives
$$
4^{-mK} \preceq n^\frac{1}{K} \mu^{-\frac nK}\leq n^\frac{1}{K} \mu_{min}^{-\frac nK}\quad \text{and}\quad \mu_{max}^{-nK} \leq  \mu^{-nK} \preceq 4^{-\frac mK}
$$
from which the assertion easily follows.
\epf

\section{Models of convergence type}\label{s5}

Let $f$ be a model of Section \ref{s2} with tract $\Om=\varphi(\mathcal H_{\log r})$ given by Example \ref{exam 1}. We recall that in this case
\beq\label{35}
\lm^\Th \asymp 4^{-m} \frac{1}{m^\al} \quad , \quad m\geq 1\, \quad \alpha>1 \,,
\eeq
where the involved multiplicative constant $C_f$ does depend on the model $f$.

\bthm\label{32}
The transfer operator $\pft$ of $f$ is of convergence type (with $\Th =\log 4 /\log 3$) and there exists $M=M_f\geq 1$
such that
$$
\pft \1 (w) \leq M^t ( \log |w| )^{1-t} 
$$
for every $w\in \D^*_{r}$ and every $t\geq \Th$.
\ethm

\brem\label{32'}As explained in Section 8 of \cite{MUpreprint3}, Theorem \ref{32} implies that for these models the full thermodynamic formalism holds for all $t\geq \Th$ so also in the
particular case when $t=\Th$ equals the transition parameter. 
\erem

\bpf[Proof of Theorem \ref{32}]
From Proposition \ref{13} we have a precise estimate of $\pft$ which implies
$$
\pft \1 (w) \preceq ( \log |w| )^{1-t} \left[1+ \sum_{n\geq 1}   \sum_k (diam \,\cW_{n,k})^t  \right]\,
$$
since $t>1$.
Take $\cU=\cU_{ext}$ and remember from
Lemma \ref{16} that $\cU$ contains all the sets $Q_{n,k}$, hence $\bigcup_{n,k}\cW_{n,k} \subset \cV_{ext}=\psi (\cU_{ext})$.
Set $\cI =\{(m,l)\, ; \; \wh_{m,l}\cap \cV_{ext}\neq \emptyset\}$ so that $\{\wh_{m,l}\; ,  (m,l)\in \cI\}$ is a Whitney covering of $\cV_{ext}$
with respect to $\Upsilon=\psi (\partial \cU \cap i\R)$.
In particular, for every $(n,k)$ there exists $(m,l)\in \cI$ such that 
$$
\cW_{n,k}\cap \wh_{m,l} \neq \emptyset \; \text{ and }\; diam\, \cW_{n,k} \leq C diam\, \wh_{m,l}
$$
for some uniform constant $C$.
 It thus follows from Lemma \ref{28} that 
\beq\label{34}
\sum_{n\geq 1}   \sum_k (diam \,\cW_{n,k})^t \leq  B_* C^{t}\sum_{(m,l)\in \cI}    (diam \,\wh_{m,l})^t .
\eeq
We have $diam \,\wh_{m,l} \asymp \lm$ (Lemma \ref{25}) which, along with \eqref{25.2} of Lemma \ref{25} and \eqref{35}, implies that for every $t\geq \Th$
$$
\pft \1 (w) \preceq ( \log |w| )^{1-t} \sum_{m\geq 1} 4^m \left( C_f \, 4^{-m}\frac{1}{m^\al }\right)^{t/\Theta}
\leq C_f^{t/\Theta}M'( \log |w| )^{1-t}
$$
where $M'= \sum_{m\geq 1} \frac{1}{m^\al } <\infty$.

It remains to show that $\pft\1 (w)=\infty$ for $t<\Theta $ and for some $|w|> r\geq e^2$.  
We first provide an appropriate lower bound for the transfer operator starting again from Proposition \ref{13}.
The expression there gives, for every $w\in \D_r^*$ and still with $x=\log |w|$,
\begin{align*}
\pft \1 (w)& \succeq ( \log |w| )^{1-t} \sum_{n\geq 1}  \left\{\left(1+\frac{n+j}{x}\right)^{1-t} \sum_k (diam \,\cW_{n,k})^t  \right\}.\end{align*}
Since
$$
1+\frac{n+j}{x}\leq 1+n+\frac{j}{\mu^{j-1}}\leq 1+n+\frac{\mu}{\log \mu}
$$
we have
$$
\pft \1 (w) \succeq x^{1-t} \sum_{n\geq 1}  \left\{\left(1+\frac{\mu}{\log \mu}+n\right)^{1-t} \sum_k (diam \,\cW_{n,k})^t  \right\} .
$$
Let $0<t<\Th$ and let
$\ep >0$ such that $t'=t+\ep <\Th$.  By \eqref{23} and \eqref{19}
$$diam\, \cW_{n,k} \preceq r_n^{1/K} \preceq n^{1/K} \mu^{-n/K}$$
and thus
$$
c_t=\inf _{n\geq 1} \, (1+\frac{\mu}{\log \mu}+n)^{1-t} (diam \,\cW_{n,k})^{-\ep} \succeq \min _{n\geq 1} \mu^{n\frac{\ep}{K}}\frac{(1+\frac{\mu}{\log \mu}+n)^{1-t}}{n^{\ep /K}} >0.
$$
Injecting this in the lower estimate of $\pft \1$ gives
$$
\pft \1 (w)\succeq  c_t\,  x^{1-t} \sum_{n\geq 1}  \sum_k (diam \,\cW_{n,k})^{t '}.
$$
In view of Lemma \ref{16}, the sets $\cW_{n,k}$ cover $\cV_{int}=\psi(U_{int})$. The same arguments that lead to \eqref{34} gives
\beq\label{eq t1}
\sum_{n\geq 1}   \sum_k (diam \,\cW_{n,k})^{t'} \geq  \frac{1}{B_* C^{t'}}\sum_{(m,l)\in \cI}    (diam \,\wh_{m,l})^{t '}
\eeq
where, this time,  $\cI$ is the set of all the $(n,k)$
such that $\wh_{m,l}\cap \cV_{int}\neq \emptyset$. 
Consequently,
$$
\pft \1 (w) \succeq   \frac{c_t\, x^{1-t}}{B_* C^{t'}}\sum_{(m,l)\in \cI} (diam \,\wh_{m,l})^{t'}\,.
$$
There exists $m_{min}$ such that for every $m\geq m_{min}$ the number  $\#\{l:(m,l)\in \cI\}$ is comparable to $4^m$ (\eqref{25.2}  of Lemma \ref{25}  this times applied with $\cV=\cV_{int}$). On the other hand,  $diam \,\wh_{m,l} \asymp \lm$
and,  by \eqref{35}, $\lm^\Th \geq C_f^{-1} 4^{-m}\frac{1}{m^\al}$. 
Since $t'<\Th$ it follows that $\pft \1 (w)$ is divergent.
\epf

\section{Approximation}\label{s7}

Up to now we have considered particular model functions and have obtained good estimates for their transfer operator.
But we really need  global entire functions having similar properties. Such functions will be obtained
with the help of an approximation result of model functions by entire functions. There are several approximation results,
the most general being the quasiconformal approximations by Bishop \cite{Bishop-S-2016, Bishop-EL-2015}.
We will use Rempe's uniform approximation \cite{Rempe-HypDim2} which is more restrictive but very precise.
It approximates models that are defined on the extended tract $\hat \Om \supset \Om$. Here is a version of his result.

\bthm[Uniform approximation]\label{thm:uniform_appr}
Let $\ph =\tau^{-1}=\hat \cH \to \hat \Om$ be a conformal map fixing infinity 
and normalized by \eqref{44}
and let $f=e^\tau : \hat \Om \to \C$.
Let $\cC:\R\to\hat\Om$ be defined by $\cC(t)= \ph ( it -13\log_+ |t| +1 )$.
Let $\tilde \Om$ be the component of $ \C\setminus \cC$ that is contained in $\hat \Omega$ and let $\tilde H = \tau (\tilde \Om )$.
So, $\overline{\tilde\cH}\subset \hat\cH$ and $\overline{\tilde\Omega}\subset \hat\Omega$.
Put 
$$
h(z)= \frac{1}{2\pi i}\int _{\mathcal C} \frac{f(\xi)}{\xi -z}d\xi \quad , \quad z\not\in \cC\,.
$$
Then this formula defines a holomorphic function for $z\notin \cC$ and the function 
$F$ defined as 
\begin{equation}
F(z)= \begin{cases}
f(z) + h(z) \quad \text{when}\quad z\in \tilde  \Om \;\; \text{and}\\
h(z) \quad \text{when}\quad z\not\in \overline{\tilde  \Om}
\end{cases}
\end{equation}
extends to an entire function $F$  in the class $\mathcal B$.
Moreover, the function $h$ satisfies the estimate
\begin{equation}\label{eq: estimating_h} 
|h(z)|\le \frac{C}{|z|_+}
\end{equation}
 where $C$ is some constant and where $|z|_+=\max(|z|,1)$.


\ethm

\subsection{Universality of estimates}
We shall use the above  approximation for varying  model functions $f$, and then pass from the estimates for the model to the estimates for the actual function $F$.  It is essential for further estimates to examine the error term $h$, i.e  the universality of the constant $C$ appearing in the inequality \eqref{eq: estimating_h}.
In order to check this universality it is sufficient to go carefully through very precise estimates provided in \cite{Rempe-HypDim2}.  

Indeed, the domain $\hat\cH$ is exactly the one considered in Remark 2 of Section 4 in \cite{Rempe-HypDim2}.  This domain is called ''initial configuration''. So, in the case under consideration  the ''initial configuration'' is fixed.



 
In  Corollary 4.5 in \cite{Rempe-HypDim2} the required estimate for the error function $h$ appears:
\beq\label{x.2'''} |h(z)|<M_5, \quad |h(z)|\le\max(|z_0|_+, \dist (z_0,\partial \hat\Omega))  \frac{M_6}{|z|_+} \eeq
for all $z\in \C\setminus \cC$. The  function $F$ is in class $\mathcal B$ since
\beq\label{x.25}S(F)\subset \overline{\mathbb D_{2M_5}}.\eeq
Here, the constants $M_5$ and $M_6$ depend only on the initial configuration, which is fixed. We may assume that $M_5\geq r\geq e^2$. The point $z_0$ which appears in \eqref{x.2'''}  is defined as 
$$z_0=\varphi(1).$$
It follows directly from the normal family property of the family of maps $\ph$ as explained in Remark \ref{normality f} that
 $\max(|z_0|_+, \dist (z_0,\partial \hat\Omega))\asymp 1$. Thus there exists $M_0 > 2\max \{M_5, M_6\}>r$
such that
 \beq \label{x.2}
|h(z)|\le \frac{M_0}{|z|_+}
\quad \text{ , }\; z \in \C\setminus C\eeq
 for all our examples.
In particular, we have the statement of Remark 4.6 in \cite{Rempe-HypDim2}: 
\beq\label{x.5}
\text{$|F|\leq M_0 \; $ outside $\; \tilde \Om\;$ 
and  $\;|F|\leq |f|+M_0\;$ in $\;\tilde \Om$. }\eeq

\smallskip

\subsubsection{Disjoint type and order}
The above estimates  allow us  to fix the translation constant $T$ in \eqref{10} such that all the models $\bf f$ and also the shifted 
approximation functions defined by $$\disF (z) = F(z -T)\quad , \quad z\in \C\; ,$$ are of disjoint type. 
The following lemma shows that this is the case whenever $T\geq 8M_0$ with $M_0$ from \eqref{x.2}. The precise choice of $T$, in fact of $\eta$
since we will set $T=4\eta$,  will be fixed in \eqref{x.6}.

\blem\label{x.4} Choose an arbitrary $\eta\geq 2M_0$ and set $T=4\eta$. Let 
$\bf f$ be any model of Section \ref{s2}. Then, every entire function $\bf F$ associated to $\bf f$ by the above construction is of finite order and
$$
\Om_{\bf g}:= {\bf g^{-1}} (\D_{\eta}^*) \subset \D_{2\eta}^*
$$
for $\bf g = f$ and for $\bf g=F$. Consequently,
$$ J_{\bf f}\subset  \D_{2\eta}^*
 \quad \text{and}\quad J_{\bf F}\subset  \D_{2\eta}^*.$$
\elem

\bpf
We first show that $\bf F$ is of finite order. Given the definition of the order in \eqref{order} and 
the estimate \eqref{x.5} it suffices to check that the model function $f$ is of finite order, i.e. that
$$
\limsup_{z\in \Om \; , \; |z|\to \infty} \frac{\log \log |f(z)|}{\log |z|} <\infty \,.
$$
But $f(z)=e^{\tau (z)}$ for $z\in \Om$. For $\tau =\ph ^{-1}$ we have the H\"older property 
\eqref{5'} which implies 
$ |\tau (z)| \leq \left( |z|/c_1\right)^K$, $ z\in \Om$, and thus $$\rho (f)=\rho (F) \leq K <\infty\,.$$

Let $\eta\geq  2M_0 $ and $T=4\eta$. Then, by construction of $\hat \Om$,
$$ {\bf f^{-1}} (\D_{\eta/2}^*)\subset \hat \Om + T = \ph (\hat \cH) +T \subset \D_{2\eta}^*$$ for all models $\bf f$.  
In particular $\Om_{\bf f}= {\bf f^{-1}} (\D_{\eta}^*)\subset \D_{2\eta}^*$.

Concerning $\bf F$,
if $z\in \Om_{\bf F}$ then, $|{\bf F }(z)| =|F(z-T)|> \eta \geq 2 M_0$ and thus the second inequality in \eqref{x.5} applies and gives
$$|{\bf f}(z)|= |f(z-T)|>\eta-M_0\geq \eta/2.$$
This shows that $z\in {\bf f^{-1}} (\D_{\eta/2}^*) \subset \D_{2\eta}^*$. The proof is complete.
\epf

\subsection{Comparing the transfer operators of the model function $f$ and of the approximating entire function $F$.}
 
Let $f:\hat\Omega\to\C $ be again one of our model functions and let  $F$ be the approximating entire map in class $\mathcal B$ produced by the  construction described in Theorem \ref{thm:uniform_appr}.

\begin{lem}\label{lem:approx}
There exists $R_0\geq 4M_0$ such that for all $z\in \Omega \cap \D_{R_0}^*$,
$$\frac{1}{2}\le \frac{|F(z)|}{|f(z)|}\le 2 \quad \text{and}\quad
\frac{1}{2}\le  \frac{|F'(z)|}{|f'(z)|}\le 2.$$
Consequently, $$\frac{1}{4}\le  \frac{|F'(z)|_1}{|f'(z)|_1}\le 4
\quad \text{,} \quad z\in \Omega \cap \D_{R_0}^*.$$
 Here, $R_0$ depends only on the constant $M_0$ from the estimate \eqref{x.2}.
\end{lem}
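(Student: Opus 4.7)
The plan is to reduce both comparisons to a single estimate: since the approximation theorem gives the decomposition $F(z)=f(z)+h(z)$ on $\tilde\Omega$, and since $\Omega=\varphi(\mathcal H_{\log r})$ is contained in $\tilde\Omega=\varphi(\tilde\cH)$ (because $\log r\ge 2 > 1\ge 1-13\log_+|y|$, so $\mathcal H_{\log r}\subset\tilde\cH$), it suffices to show that on $\Omega\cap\D_{R_0}^*$
\[
\frac{|h(z)|}{|f(z)|}\le \tfrac{1}{2} \quad\text{and}\quad \frac{|h'(z)|}{|f'(z)|}\le\tfrac{1}{2},
\]
with $R_0$ depending only on the universal constant $M_0$ from \eqref{x.2}. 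Then $|F/f|,|F'/f'|\in[1/2,2]$, and the cylindrical derivative estimate follows by the identity $|g'(z)|_1=|g'(z)|\,|z|/|g(z)|$, which gives $|F'|_1/|f'|_1=(|F'|/|f'|)(|f|/|F|)\in[1/4,4]$.

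The value comparison is immediate. On $\Omega$ we have $|f(z)|=e^{\Re\tau(z)}\ge e^{\log r}=r\ge e^2$, while by \eqref{x.2} $|h(z)|\le M_0/|z|$. Thus $|h(z)|/|f(z)|\le M_0/(r|z|)\le 1/2$ as soon as $|z|\ge 2M_0/r$; taking $R_0=4M_0$ is certainly sufficient.

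The derivative comparison is the heart of the argument. The geometric input is that $\Omega$ sits at definite distance from the error curve $\mathcal C$: in the $\hat\cH$-coordinate, $\tau(\Omega)=\mathcal H_{\log r}$ has distance at least $\log r - 1\ge 1$ from $\partial\tilde\cH=\{x=1-13\log_+|y|\}$, so $\D(\tau(z),1/2)\subset\tilde\cH$ for every $z\in\Omega$. Since $\varphi$ is univalent on $\hat\cH$, the Koebe distortion and $\tfrac14$-theorems supply universal constants $c_1,c_2>0$ such that
\[
\D(z,c_1|\varphi'(\tau(z))|)\subset\varphi\bigl(\D(\tau(z),1/2)\bigr)\subset\D(z,c_2|\varphi'(\tau(z))|).
\]
Set $\rho:=c_1|\varphi'(\tau(z))|$, so that $\D(z,\rho)\subset\tilde\Omega$ is disjoint from $\mathcal C$, hence $h$ is holomorphic on $\D(z,\rho)$. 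Koebe's $\tfrac14$-theorem also yields $|\varphi'(\tau(z))|\le 4\dist(z,\partial\hat\Omega)/\dist(\tau(z),\partial\hat\cH)\le 4|z|$, because $0\in\partial\hat\Omega$ (so $\dist(z,\partial\hat\Omega)\le|z|$) and $\dist(\tau(z),\partial\hat\cH)\ge 1$. Choosing $c_1$ small (absorbing it into $R_0$ later) we may assume $\rho\le |z|/2$. Then on the circle $|w-z|=\rho$ we have $|w|\ge|z|/2$, and Cauchy's estimate gives
\[
|h'(z)|\le\frac{1}{\rho}\sup_{|w-z|=\rho}|h(w)|\le\frac{2M_0}{\rho|z|}.
\]
On the other hand $f=e^\tau$ implies $f'(z)=f(z)/\varphi'(\tau(z))$, i.e.\ $|f'(z)|=c_1|f(z)|/\rho$, so the factor $\rho$ cancels:
\[
\frac{|h'(z)|}{|f'(z)|}\le \frac{2M_0}{c_1\,|z|\,|f(z)|}\le\frac{2M_0}{c_1\,r\,R_0}.
\]
Taking $R_0$ a suitably large multiple of $M_0$ (with the universal Koebe constant $c_1$ and $r\ge e^2$) makes the right-hand side $\le 1/2$, as needed.

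The main obstacle, and really the only non-routine point, is the derivative step: one must locate a disk around $z$, of radius large enough to give a useful Cauchy estimate, disjoint from $\mathcal C$, and transported via $\varphi$ with controlled distortion. The clean cancellation between the radius $\rho\asymp|\varphi'(\tau(z))|$ in the numerator of the Cauchy bound and the factor $|\varphi'(\tau(z))|^{-1}$ in $|f'|$ is what makes the final ratio independent of the model, so that $R_0$ depends only on $M_0$ and universal constants coming from the fixed initial configuration $\hat\cH$.
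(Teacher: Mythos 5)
Your proof is correct, and its architecture is the same as the paper's (write $F=f+h$ on $\Omega\subset\tilde\Omega$, bound $|h/f|$ directly, and obtain the derivative bound by a Cauchy estimate on the error function combined with the identity $f'=f\cdot\tau'$), but the implementation of the Cauchy step is genuinely different and worth noting.

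The paper passes to the $\xi$-plane: it sets $\Phi=h\circ\varphi$, observes $F'/f'=1+(\Phi'\circ\varphi^{-1})/f$, and Cauchy-estimates $\Phi'$ over the \emph{unit} disk $\D(\xi,1)\subset\tilde\cH$; to guarantee $|h|\le 1/4$ on the image $\varphi(\partial\D(\xi,1))$ it invokes the quasiconformal H\"older bounds \eqref{5'} twice to push $|\tilde z|\ge 4M_0$. You instead stay in the $z$-plane and Cauchy-estimate $h'$ over a Koebe disk $\D(z,\rho)$ of radius $\rho\asymp|\varphi'(\tau(z))|$ lying in $\tilde\Omega$; the crucial cancellation $\rho\cdot|f'(z)|\asymp|f(z)|$ comes out of $f'=f/\varphi'\circ\tau$. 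Your replacement of the H\"older estimates \eqref{5'} by the Koebe bound $|\varphi'(\tau(z))|\lesssim\dist(z,\partial\hat\Omega)\le|z|$ is a real simplification: it makes the lower bound $|w|\ge|z|/2$ on $\partial\D(z,\rho)$ immediate, whereas the paper has to compute with the H\"older exponents. The price is that you have to construct and size the Koebe disk yourself rather than use the obvious unit disk in $\xi$-coordinates, but both routes produce the same uniform $R_0=R_0(M_0)$ because the Koebe constants, like the H\"older constants, are universal for the fixed initial configuration $\hat\cH$.

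Two minor points. First, the value comparison alone needs only $|z|\gtrsim M_0/r$, so $R_0=4M_0$ is far more than enough there; the binding constraint comes from the derivative step, as you correctly note. Second, your chain ``$|\varphi'(\tau(z))|\le 4\dist(z,\partial\hat\Omega)/\dist(\tau(z),\partial\hat\cH)\le 4|z|$'' is slightly loose (the denominator is $\ge\log r\ge 2$, giving $\le 2|z|$), but the direction and uniformity are right and nothing downstream is affected.
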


\begin{proof}
The estimate \eqref{x.2} implies
\beq\label{x.20}
 |h(z)|\le 1/4 \quad \text{for} \quad |z|\geq 4M_0 \; , \; z\not\in \cC.
 \eeq
Since $f:\Om=\Om_{\log r}\to \D_r^*$, for $z\in\Omega$ we have $|f(z)|>r\geq e^2$.
Consequently,
$$\frac{|F(z)|}{|f(z)|}=|1+h(z)/f(z)|\in \Big[\frac{15}{16},\frac{17}{16}\Big] \quad \text{,}\quad z\in \Om \cap \D_{4M_0}^*.$$

\noindent
Passing to the derivatives, if $\Phi:=h\circ \varphi$  then
$$F'=f'+h'=f'+\Phi '\circ \varphi^{-1}\circ (\varphi^{-1})'$$
and, as $f=\exp\circ \varphi^{-1}$,
$f'=f\cdot (\varphi^{-1})'$.
So,

$$\frac{|F'|}{|f'|}=\left |1+\frac{\Phi '\circ \varphi^{-1}}{f}\right | \quad \text{on}\quad \Om .$$
Since $|f(z)|>2$ in $\Omega$,  the required estimate relies on the estimate of $\Phi '$.
In order to estimate it, let $z\in\Omega$ and put $\xi=\varphi^{-1}(z)\in \cH_{\log r}\subset \cH_ 2$. 
Then $\D_\xi:=\D(\xi,1)\subset \cH_{1} \subset \ph^{-1} (\tilde \Om )$. This allows to make the estimate
$$
|\Phi '(\xi)|=\left |\frac{1}{2\pi i}\int _{\partial \D_\xi}\frac{\Phi(v)}{(v-\xi)^2}dv\right |\le  \sup_{\tilde z\in \varphi(\partial \D_\xi)}|h(z)|
$$
In order to use the estimate  \eqref{x.2}  for the function $h$ we need to estimate $\inf_{\tilde z\in\varphi(\partial \D_\xi)}|\tilde z|$. 
But this can be done by using twice the H\"older continuity property \eqref{5'}. It shows that 
$|\xi|\geq \left( |z|/c_2\right)^{1/K}$ and also that 
$$
|\ph (\hat z )|\geq c_1 |\hat z|^{1/K} \geq c_1 \Big(\big( |z|/c_2\big)^{1/K} -1\Big)^{1/K} 
\; \text{ for every } \; 
\hat z\in\partial \D_\xi.
$$
Choose now $R_0\geq 4M_0$ such that $c_1 \big((R_0/c_2)^{1/K} -1\big)^{1/K} \geq 4M_0$. Then, if $z\in \D_{R_0}^*$,
the corresponding $\inf_{\tilde z\in \varphi(\partial \D_\xi)}|\tilde z|\geq 4M_0$ which enables us to conclude using  \eqref{x.20}:
\beq\label{eq:est_K'}
|\Phi'(\xi)|\leq  \sup_{\tilde z\in \varphi(\partial \D_\xi)}|h(\tilde z)|\leq 1/4 \quad \text{for every $z\in \Om \cap \D_{R_0}^*$}
\eeq
which shows the required estimate of the ratio $|F'(z)|/|f'(z)|$. The estimate for the ratio $|F'(z)|_1/|f'(z)|_1$ follows directly, since
$|f'|_1(z)=|f'(z)|\cdot\frac{|z|}{|f(z)|}$ (and the analogous formula for $F$).
\end{proof}

Assume in the following that $R_0\geq 4M_0$ is such that Lemma \ref{lem:approx} holds and let
\beq \label{x.6}
\eta = 3 \max \Big\{M_0  , \exp\left( (R_0/c_1)^K\right)\Big\}.
\eeq 
Then  Lemma \ref{x.4} applies. Also
 \beq\label{x.7} f^{-1}(\D_{\eta / 3} ^*) \subset \D_{R_0}^*\subset \D_{4M_0}^*.\eeq
since $\exp ^{-1}\big( \D^*_{\eta /3}\big)\cap \D_{\log \eta /3} =\emptyset$
 and since \eqref{5'} implies $\ph ( \D_{\log \eta /3})\supset \D_{R_0}$.

\medskip

The transfer operator has been defined in \eqref{6}. Since we  now deal with several functions we write 
$\cL_{t,g}$ for the transfer operator of a function $g$.
We first compare the operators of an initial model $f$ and its approximation $F$.

\bprop\label{x.8}
There exists a constant $\cK\geq 1$ such that the following holds.
Let $f$ be a model as defined in Section \ref{s2}, $F$ an approximating entire function of $f$ 
given by Theorem \ref{thm:uniform_appr}. 
Then
$$
\frac1{\cK^t}\leq \frac{\pf_{t,  f}\1 (w)}{\pf_{t, F} \1 (w)}\leq \cK^t\quad \text{for all} \quad w\in \D_\eta ^*\,
$$
and the same holds if $f$, $F$ are replaced by their disjoint type versions ${\bf f}$, ${\bf F}$.
\eprop

We thus get first examples of entire functions for which the full thermodynamic formalism holds
in the particular case where $t$ equals the transition parameter $t=\Th$. 

\begin{cor}\label{x.8bis}
The transition parameter $\Theta$ is the same for the model $\bf f$ and for the approximating entire function $\bf F$.
Moreover, $\bf F$ is also of convergence type and Theorem \ref{32} as well as Remark \ref{32'}, hence the full thermodynamic formalism, meaning that all the results in Section 8 of \cite{MUpreprint3}, is also valid for the disjoint type entire function $\disF$ for all $t\geq \Th$.
\end{cor}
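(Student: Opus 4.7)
The plan is to read off the corollary directly from Proposition \ref{x.8} combined with Theorem \ref{MU Thm 4.1} and Theorem \ref{32}, so the only real content is to assemble these pieces and check that no hypothesis is lost in the transfer.

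\textbf{Step 1: Equality of transition parameters.} I would first establish $\Theta_{\bf F} = \Theta_{\bf f}$. The transition parameter is characterized as the threshold separating values of $t$ for which $\pf_t\1(w)$ converges from those for which it diverges. Proposition \ref{x.8} says that for every $t\ge 0$ and every $w\in \D_\eta^*$ the quantities $\pf_{t,\bf f}\1(w)$ and $\pf_{t,\bf F}\1(w)$ are comparable up to a factor $\cK^t$; in particular one is finite if and only if the other is. Hence the set of $t$'s producing convergence is the same for both operators, giving $\Theta_{\bf F}=\Theta_{\bf f}=\Theta$.

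\textbf{Step 2: Convergence type and uniform estimate.} By Theorem \ref{32} applied to the model $\bf f$, one has $\pf_{\Theta,\bf f}\1(w)\le M^\Theta (\log |w|)^{1-\Theta}<\infty$ for every $w\in \D_r^*$. Since $\eta>r$ by \eqref{x.6}, for any $w\in \D_\eta^*\subset \D_r^*$ Proposition \ref{x.8} yields
$$\pf_{\Theta,\bf F}\1(w)\le \cK^\Theta \pf_{\Theta,\bf f}\1(w)<\infty.$$
 Theorem \ref{MU Thm 4.1} then promotes this pointwise finiteness to the statement that $\pf_{\Theta,\bf F}$ is a bounded operator on $\cC_b(\overline{\Om_{\bf F}})$, so $\bf F$ is of convergence type. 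Combining the two estimates more generally gives, for every $t\ge \Theta$ and every $w\in \D_\eta^*$,
$$\pf_{t,\bf F}\1(w)\le \cK^t \pf_{t,\bf f}\1(w)\le (\cK M)^t (\log |w|)^{1-t},$$
which is the exact analogue of Theorem \ref{32} for $\bf F$, with $M$ replaced by $\cK M$.

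\textbf{Step 3: Full thermodynamic formalism.} As explained in Remark \ref{32'}, the conclusions of Section 8 of \cite{MUpreprint3} are consequences of (i) the disjoint type property and quasidisk tract geometry of the map and (ii) the quantitative bound and convergence of the transfer operator at $t=\Theta$. For $\bf F$, property (i) has already been secured by Lemma \ref{x.4} together with the uniform approximation produced by Theorem \ref{thm:uniform_appr}, and property (ii) is exactly what Step 2 provides. Hence Theorem \ref{32} and all results of Section 8 of \cite{MUpreprint3} transfer verbatim to the disjoint type entire function $\disF$ for every $t\ge \Theta$.

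The main obstacle I anticipate is not the operator comparison itself (which is almost mechanical once Proposition \ref{x.8} is in hand) but verifying that the hypotheses of Theorem \ref{MU Thm 4.1} genuinely apply to $\bf F$: one must know that the tract of $\bf F$ is a quasidisk and that $\bf F$ is disjoint type. Both are consequences of the approximation construction and the quasicircle properties from Section \ref{s2}, and the relevant constants are uniform across the family; this step uses a different set of tools (uniform approximation, Ahlfors' three-point condition) from the transfer-operator estimates, but no new difficulty beyond what the preceding sections have already addressed.
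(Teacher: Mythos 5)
Your proof is essentially the intended one (the paper gives no explicit proof of Corollary \ref{x.8bis}; it is meant to follow from Proposition \ref{x.8} exactly as you assemble it). One small inaccuracy to flag: in Step 2 you write ``Theorem \ref{32} applied to the model $\bf f$,'' but Theorem \ref{32} is stated for the un-translated model $f$ (with tract $\Om$), not for its disjoint-type version $\bf f$. To get the analogous bound for $\bf f$ you also need Lemma \ref{x.10}, which says $\pf_{t,\bf f}\1 \leq A^t\,\pf_{t,f}\1$ on $\D_\eta^*$; inserting this gives $\pf_{\Theta,\bf f}\1(w)\le (AM)^\Theta(\log|w|)^{1-\Theta}$, and then the rest of your chain via Proposition \ref{x.8} goes through unchanged (with $M$ replaced by $A\cK M$). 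Alternatively you can skip the detour through $\bf f$ and apply Proposition \ref{x.8} directly to $f$ and $F$, then pass to $\bf f$, $\bf F$ with Lemma \ref{x.10} at the end; either route is fine. Your remark about needing the quasidisk-tract and disjoint-type hypotheses for Theorem \ref{MU Thm 4.1} is the right thing to worry about, and your resolution via Lemma \ref{x.4} and the uniform quasicircle constants of Section \ref{s2} matches what the paper relies on implicitly.
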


\bpf[Proof of Proposition~\ref{x.8}.]
Lemma \ref{lem:approx} shows that the values of the derivatives of $f$ and $F$ are comparable at a given point $z$.
But, in the formulas defining the operators  $\pftf$ and $\pftF$  the summation runs over preimages of a given point $w$ under $f$ and $F$, respectively. So, in order to compare $\pftf(w)$ and $\pftF(w)$, the preimages of $w$ under $f$ and $F$ will be ''paired'' and the derivatives of $f$ and $F$ on these paired preimages will be compared.

Let $w\in \D_\eta ^*$. Then all preimages of $w$ under the model map $f$ are in $\Om_{\log \eta}=\varphi(\mathcal H_{\log \eta})$ and 
$$\sum_{z\in f^{-1}(w)} |f'(z)|^{-t}=\sum_{\xi\in \exp^{-1}(w)}\left (\frac{|\varphi'(\xi)|}{|\varphi(\xi)|}\right )^t.$$
Take the circle $\sg $ centered at $w$, with radius $1$, and 
for each  $\xi\in\exp^{-1}(w)$  let  $\gamma_\xi$ be the preimage of $\sg$ under $\exp$, surrounding $\xi$.   Finally, put $\tilde\gamma_\xi=\varphi(\gamma_\xi)$. Notice that the domain bounded by $\tilde\gamma_\xi$ contains exactly one preimage of $w$ under the map $f$; this is the point $z=\varphi(\xi)$.

On each curve $\tilde\gamma_\xi$ we have that $|f(z)-w|=1$, while
\beq\label{x.23}|(F(z)-w)-(f(z)-w)|=|h(z)|<\frac{M_0}{|z|}\eeq
where $M_0$ comes from \eqref{x.2}.
From \eqref{x.7} we know that $|z|>4M_0$ since $w\in\D_\eta ^*\subset \D^*_{\eta /3}$.  Hence, the right hand inequality of \eqref{x.23} is strictly less than $1$.
This  allows to conclude via Rouch\'e's Theorem that $F$ has exactly one preimage in the region bounded by $\tilde\gamma_\xi$. Denoting this preimage by $\tilde z$, we need to compare 
$|f'(z)|_1$ and $|F'(\tilde z)|_1$.
But this directly follows from Koebe's Distortion Theorem and Lemma~\ref{lem:approx}, and the constant $\cK$ in Proposition~\ref{x.8} is exactly 
a Koebe constant times an absolute one.
This gives the first part of the required estimate, i.e.
$$\frac{\pf_{t,  f}\1 (w)}{\pf_{t, F} \1 (w)}\leq \cK^t$$
The second  part of the estimate can be obtained in a similar way: Let $w\in \D_\eta^*$. 
Since $\eta\geq2M_0>M_0+2$, the disk $\mathbb D(w,2)$ does not contain singular values of  
$F$, and  $F^{-1}(\mathbb D(w,1))$ is a countable union of Jordan domains $\mathcal D_I$
each of them being mapped bijectively and with bounded distortion onto $\mathbb D(w,1)$. 

If $z\in \mathcal D_I$ then $|F(z)| \geq |w|-1 > \eta -1\geq 2M_0$. It thus follows from \eqref{x.5} that $z\in \tilde \Om$
and thus all the domains $\mathcal D_I\subset  \tilde \Om$. Moreover, still using \eqref{x.5},
$$
|f(z)|\geq |F(z)|-M_0 > \eta -1 -M_0\geq \frac 23 \eta -1 \geq \frac \eta 2
$$
since $\eta \geq 3M_0\geq 6$. This allows to apply \eqref{x.7} and thus to get $|z|\geq 4M_0$.

On the curve $\gamma_I$ bounding $\mathcal D_I$ we have $|F(z)-w|=1$, while 
$$|(F(z)-w)-(f(z)-w)|=|h(z)|<\frac{M_0}{|z|_+}\leq \frac{M_0}{4M_0}=1/4\, .$$  Again, Rouche's theorem implies that $f$ has 
exactly one preimage of $w$ in each domain $\mathcal D_I$.  Applying again Koebe's Distortion Theorem and Lemma~\ref{lem:approx}, we obtain the desired inequality:
$$\frac{\pf_{t,  f}\1 (w)}{\pf_{t, F} \1 (w)}\geq \frac{1}{\cK^t}.$$

\smallskip

Let us finally consider the disjoint type functions $\bf f, F$. If $z, \tilde z$ is a
pair of preimages of $w$ under $ f, F$ then, clearly, ${\bf z}= z+T,\,  {\bf \tilde z}= \tilde z+T$ is a pair of preimages
of $\bf f$ and $\bf F$ respectively and we have
$$
\frac{|{\bf f'}({\bf z})|_1}{|{\bf F'}({\bf \tilde z})|_1} = \frac{|{ f'}(z)|_1}{|{ F'}(\tilde z )|_1} \left| \frac{\bf z}{{\bf z}-T}\frac{{\bf\tilde z} -T}{{\bf \tilde z}}\right|
\asymp  \frac{|{ f'}(z)|_1}{|{ F'}(\tilde z )|_1}
$$
with involved multiplicative constants independent of the functions, of the point $w\in \D_\eta^*$ and of the 
pair of preimages. 
This clearly completes the proof of Proposition \ref{x.8}.
\epf

There is also a relation between the transfer operator of the functions and their disjoint type version.

\blem\label{x.10} Let $A= 1+\frac T4$. Then
$$
\frac{1}{A^t} \pftf \1 \leq \cL_{t, {\bf f}}\1\leq A^t \pftf \1 \quad \text{on}\quad \D_\eta^* .
$$
\elem

\bpf
This follows from an elementary estimation based on \eqref{x.9} and on
\beq\label{x.21'}
|{\bf f'} ({\bf z})|_1 = |{ f'} ({\bf z}-T)|_1\frac{|{\bf z}|}{|{\bf z}-T|} \;\, , \quad {\bf z}\in \Om_{\bf f} = {\bf f}^{-1}(\D_\eta^*).
\eeq
\epf

\section{Topological pressure and Bowen's Formula}\label{s8}

Let $f$ be a disjoint type model or entire function and consider again $\pft =\pftf$ its tranfer operator.
 By Theorem 8.1 of \cite{MUpreprint3} the limit
\beq\label{36} P(t )= P_f(t)=\lim_{n\to\infty} \frac{1}{n} \log \pf^n_t\1(w)\eeq
exists and, by bounded distortion, it 
does not depend on $w\in \D_r^*$ (for $r$ sufficiently large). This limit is called \emph{topological pressure} and for a convergence type function the pressure $P(\Th ) $ is finite.
The basic properties  is that $t\mapsto P(t)$ is a convex and strictly decreasing function
on $(\Theta , \infty )$ with $P(t)=\infty$ if $t<\Th$, $P(t)$ is finite if $t>\Theta$ and $\lim_{t\to\infty}P(t)=-\infty$.  
Consequently, the map $t\mapsto P(t)$ has a unique zero $h>\Th$ provided there exists $t>\Th$ such that $P(t)>0$.

We refer to \cite{ Mayer:2017ab} for the notion of H\"older tract. All what is needed here is that the
 tracts of our examples have this property since they are quasidisks.

\bprop \label{38}
Assume that the disjoint type entire function $f$ has only one logarithmic  tract, assume that this tract is H\"older.
Then
$$
HypDim(f)=\inf \big\{ t>0\; , \;\; P(t)<0 \big\}.
$$
\eprop

\bpf
Consider first the case that $P(t)>0$ for some $t>\Th$ in which case the pressure has a unique zero $h>\Th$. 
The assumptions on $f$ imply that  \cite{MUpreprint3} applies to them and, in this case, the statement in  Proposition \ref{38} is exactly 
the Bowen's Formula in  \cite{MUpreprint3} which states that $$HypDim(f) = h > \Th.$$

\noindent
It remains to consider the case where $P(t)\leq 0$ for $t>\Th\;$ and, clearly, $\;P(t) = \infty$ for $t<\Th$.
We then have to show that $$HypDim(f) = \Th.$$
The H\"older tract assumption along with \cite{Mayer:2017ab} gives $HypDim(f) \geq \Th$ no matter how $P$ behaves.
For the other inequality, let us first recall that the thermodynamic formalism of  \cite{MUpreprint3} applies to $f$ for every parameter $t> \Th$. In particular, there exists $e^{P(t)}$--conformal measure which allows to employ Lemma 8.1  in \cite{MyUrb08}. This Lemma gives the required estimate since it shows that  $HypDim (f)\leq t$ whenever $P(t)\leq 0$.
\epf


\section{Convergence type entire functions with positive pressure}\label{s6}

For the models of the previous section the topological pressure, introduced in \eqref{36}, is finite for every $t\geq \Th$
but certainly we may have $P(\Th ) <0$. Here we consider the disjoint type versions of the models given by Example \ref{exam 2}
and show that they have positive pressure for $t=\Th$ and even for slightly larger values of $t$
provided the number $N$ in Example \ref{exam 2} has been chosen sufficiently large.
We then also show that this property is true for the disjoint type approximating entire functions.

We recall that the models of Example \ref{exam 2} are special cases of those of Example \ref{exam 1}.
Therefore,  they are of convergence type with $\Th =\log 4 /\log 3$ and 
Theorem \ref{32} applies.

\bprop\label{37} Let $f$ be a model of Example \ref{exam 2} and 
let  $\Th =\log 4 /\log 3$.
Then, for sufficiently large $N$ there exists $t>\Th$ such that 
$$P_g(t)>0 $$
where 
$g=\disf$, the disjoint type version of $f$, and also if $g=\disF$, the disjoint type version
of the entire function $F$ approximating $f$.
\eprop

\bpf
First,  we establish an auxiliary  estimate  for the initial model function $f$.
We shall prove that, choosing sufficiently large $N$ in the model in Example \ref{exam 2},  one can 
 find $S>\eta$ such that 
\beq\label{45}
\pf_{t,f}(\1_{\D_{S'}} )(w) \geq 2A^t
 \quad \text{for every} \quad \eta\leq |w| \leq S
\eeq
and for some $t>\Th$, where $S'=S-T$
and with $A$ from Lemma~\ref{x.10}.


In order to establish  \eqref{45}, let $N\geq 1$ be maximal such that $2^N\leq S$, and let $M$ be determined by  the inequality $2^{M-1}<T\le 2^M$.  
Consider any $w\in \D_S\cap \D_\eta ^*$ and set $x=\log |w|\in [\log \eta, \log S]$
where $\eta$ is given by \eqref{x.6}.
Let  $j\geq 0$ be again the maximal integer such that $\mu^{j-1} \leq x$. Notice that 
\beq\label{eq S} j\preceq \log (\log S) \asymp \log N.\eeq

We have to estimate $\pft (\1_{\D_{S'}} )(w) $ and, in order to do so, we first describe the preimages 
$z\in f^{-1} (w)$ that are in the disk $\D_{S'}$. We have $z=\ph(\xi)=\psi(h(\xi))$ where $\xi =x+iy$ and where $y\in I_{n+j}$ for some 
$n\geq 0$. Selfsimilarity of $\psi$ (Lemma \ref{4}) yields
$$
z=2^{n+j}\psi \left(\mu^{-(n+j)}h(\xi )\right).
$$
On the other hand, $
|h(\xi )|\asymp |\xi| \asymp \mu^{n+j}
$ since $h'(\infty)=1$ (see Proposition \ref{41}) and thus $|\psi  \left(\mu^{-(n+j)}h(\xi )\right)|\preceq 1$
and $|z|\preceq 2^{n+j}$. Denote by $c$ the constant in the last inequality, meaning that it becomes
$|z|\leq c 2^{n+j}$. Then, 
by the choice of $N$, we see that $z\in \D_{S'}$ if
$
c2^{n+j}< 2^N-2^M
$. This is the case if $n< N-j +\left (\log(1-2^{M-N})- \log c\right ) / \log 2$ and thus, because of \eqref{eq S} and since we will take $N$ large, it thus suffices to have 
$n\leq N/2$.

Given this discussion on the preimages of $w$, we get out of  the expression of $\pft$ in Proposition \ref{13}
that
\begin{align*}
\pf_{t,f} (\1_{\D_{S'}} )(w) &\succeq x^{1-t} \sum_{n=1}^{[N/2]}\left\{ \left(1+\frac{n+j}{x}\right)^{1-t} \sum_k(diam \,\cW_{n,k})^t \right\}\\
&\succeq N^{1-t} \sum_{n=1}^{[N/2]}\left\{ \left(1+N\right)^{1-t} \sum_k(diam \,\cW_{n,k})^t \right\}\\
&\succeq N^{2(1-t)} \sum_{n=1}^{[N/2]} \sum_k(diam \,\cW_{n,k})^t.
\end{align*}
The sets  $\cW_{n,k}$ can now be replaced by  the covering $(\wh_{m,l})$ precisely like we did in the proof of Theorem \ref{32}. 
More precisely, we use  \eqref{eq t1} with the difference that we deal here with a finite sum:
$$
B_* C^t \sum_{n=1}^{[N/2]}\sum_k (diam\, \cW_{n,k})^t \geq  \sum_{(m,l)\in \cI_{finite}} (diam\, \wh_{m,l})^t
$$
and we must specify the new set of indices $\cI_{finite}$ over which the summation goes. 
In order to do so, we recall first that the sets $\cW_{n,k}$ cover $\cV_{int}=\psi (\cU_{int})$. Therefore, if
$(m,l)$ is such that  $\wh_{m,l}\cap \cV_{int}\neq \emptyset$ then there exists $(n,k)$ such that  $\wh_{m,l}\cap \cW_{n,k}\neq \emptyset $  and then, by Lemma \ref{31}, $n\leq m/b$. We can thus take
$$
\cI_{finite}
=\{(m,l)\; , \; \wh_{m,l}\cap \cV_{int}\neq \emptyset \, \text{ and } \, m_0\leq m\leq b[N/2]\}$$
where $m_0$ comes from Lemma \ref{25}.

By \eqref{25.2} of Lemma \ref{25}, for every $m\geq m_0$ the number of 
indices $(m,l)$ in $\cI_{finite}$ is comparable to $4^m$. Also, $diam\, \wh_{m,l}\asymp \lm$ and for the models of Example \ref{exam 2}  we have $\lm=\big(\frac 1{e}\big) ^m$ if $1\leq m\leq N$. 
Consequently, if $N$ is large enough, we get all in all
\beq\label{zzz}
\pf_{t,f} (\1_{\D_{S'}} )(w) \succeq N^{2(1-t)}  \sum_{m=m_{0}}^{[b[N/2]]} 4^m \left(\frac 1{e}\right) ^{mt}
\asymp   \; N^{2(1-t)}   \left(\frac 4{e^t}\right) ^{bN/2}  \,
\eeq
which is arbitrarily large provided we take 
$\Th < t < \log 4 / \log e = \log 4$ and provided that $N$ is sufficiently large.

\medskip

Coming now to the associated disjoint type model $\bf f$, and using Lemma~\ref{x.10}
we can translate the estimate \eqref{45} to the  case of $\bf f$ as follows:
\begin{equation}\label{45a}
\pf_{t,\bf f}(\1_{\D_{S}})(w) \geq 2
 \quad \text{for every} \quad \eta\leq |w| \leq S
\end{equation}
Indeed, if $w\in \mathbb D^*_\eta\cap \mathbb D_S$ and if $f(z)=w$ then 
${\bf f} ({\bf z})=w$ where ${\bf z} = z+T$. 
Moreover, if $z\in \D_{S'}$ then ${\bf z} \in \D_S$.
Combining now \eqref{45} with  the estimate in Lemma ~\ref{x.10} we obtain directly the 
required  \eqref{45a}.

But now, since $\bf f$ is of disjoint type, and, in particular ${\bf f}^{-1}(\mathbb D_\eta^*)\subset \mathbb D^*_\eta$, 
\eqref{45a} allows us to conclude inductively:
$$
\pf_{t, {\bf f}}^n\1 (w)\geq \pf_{t, {\bf f}}(\1_{\D_S}\pf_{t, {\bf f}}(\1_{\D_S} ...  \pf_{t, {\bf f}}(\1_{\D_S} )) )(w) \geq 2^n
\quad \text{for every} \quad n\geq 1.
$$
Therefore,  $P_{\bf f}(t)>0$.


\medskip

It remains to verify that the entire function $\disF$ also has positive pressure at $t$.
Proposition \ref{x.8} compares the operators of $\disf$ and $\disF$ but
with transfer operators applied to the constant function $\1$ and we have to replace it  by $\1_{\D_S}$. 
So let $w\in \D_S\cap \D_\eta^*$, consider a pair of preimages $z,\tilde z$
of $w$ under  $f,F$ respectively defined exactly like in the proof of Proposition \ref{x.8}.
Then ${\bf z}=z+T, {\bf \tilde z}=\tilde z+T$ are corresponding preimages of $w$ under $\disf , \disF$ respectively.
It is explained in this proof that, given $z\in f ^{-1} (w)$, there exists a unique $\tilde z
=\tilde z (z) \in F^{-1} (w)$ which is in the region bounded by $\ph (\ga _\xi) $.
An elementary estimation shows that $diam (\ga _\xi )\leq \frac 2 \eta$. Since $\ph$ fixes the origin and is uniformly quasisymmetric, it follows that there exists a constant $\tilde K\geq 1$ such that
$$
\tilde \ga _\xi =\ph (\ga _\xi) \subset \ph (\D_{|\xi|+\frac 2 \eta}) \subset \D_{\tilde K |\ph (\xi )|}= \D_{\tilde K |z|}.
$$
If again $S'=S-T$ then 
$$F^{-1}(w)\cap \D_{S'} \supset \{\tilde z = \tilde z (z) \; ,\;\;|z| < S'/\tilde K \;\; \text{and} \;\; f (z) =w\}.$$
Since Lemma \ref{x.10}, in fact \eqref{x.21'}, is also valid for $F,\disF$ instead of $f, \disf$, we get 
\begin{align*}
\pf_{t,\disF }(\1_{\D_S})(w) \geq & A^{-t} \pf_{t,F }(\1_{\D_{S'}})(w) \\
\geq &  A^{-t}  \sum_{ \substack{\tilde z (z)  ,\;  f (z)=w\\ \;|z| < S'/\tilde K\;}} |F' (\tilde z)|_1^{-t}
 \succeq  \pf _{t, f}(\1_{\D_{S'/\tilde K}})(w)  \; \,
 \text{, $\; w\in \D_S\setminus \D_\eta^*$,}
\end{align*}
 the last inequality resulting from the
proof of Proposition \ref{x.8}. In conclusion, in order to get \eqref{45} for the function $\disF$
it suffices to adjust the number $N$ so large such that $\pf _{t,f}(\1_{\D_{S'/\tilde K}})$
is sufficiently large on $\D_S \cap \D_\eta^*$ which is possible because of \eqref{zzz}.
\epf


\section{Proof of Theorem \ref{theo main}}

Let $f$ be a model such that the associated disjoint type entire function $\disF$ has positive pressure (Proposition \ref{37}).
Consider the analytic family of entire functions:
$$\disF_\l =\l \,\disF  \quad , \quad \l \in \C^*\,.$$ 

\noindent

\bprop\label{x.24}
The functions $\disF_\l $, $0<\l \leq 1$ do all belong to the same hyperbolic component of the parameter space of $(\disF_\l)_\l$.
\eprop

\bpf
By Lemma \ref{x.4} the tract $\Om_1$ of $\disF_1$ satisfies $\Om_1= \disF_1^{-1}(\D_\eta^*) \subset \D_{2\eta}^*$.
Clearly, for every $\l \in \ov \D\setminus \{0\}$, $\Om_\l = \disF_\l^{-1}(\D_\eta^*) \subset \Om_1$ and thus
$\Om_\l \subset  \D_{2\eta}^*$. Therefore, all the functions $\disF_\l$,  $\l \in \ov \D\setminus \{0\}$, are of disjoint type 
and thus hyperbolic.

It remains to find a simply connected domain $V\subset \C\setminus \{0\}$
that contains $(0,1]$ along with a holomorphic motion $(\ph_\l )_\l$, $\l\in V$, that
identifies the Julia sets and conjugates the dynamics of $\disF_1$ and $\disF_\l$.
But this has been shown  in Section 3 of the paper \cite{Rempe09} by Rempe.
\epf


\bprop\label{x.30}
There exists $0<l_0 <1$ such that 
$$P_{\disF_\l} (\Th ) <0$$
for every $0<|\l | \leq l_0$.
\eprop

\bpf
Again by Lemma \ref{x.4}, $\disF_\l ^{-1} (\D_{\eta}^*) \subset \D_{2\eta}^*$,  $\l \in \ov\D \setminus \{ 0\}$. In particular, $\J_{\disF_\l}\subset \D_{2\eta}^*$
for all these parameters and it suffices to study the transfer operator on $\D_{\eta}^*$.

Notice that  $\pf_{t, \disF_\l}\1(w)= \pf_{t, \disF}\1(w/\l)$ for every $w\in \D_{\eta}^*$ where $\disF=\disF_1$.
On the other hand, Proposition \ref{x.8} and Lemma \ref{x.10} imply for the operator of the generating function $\disF=\disF_1$
$$
 \pf_{t, \disF}\1\leq \cK^t  \pf_{t, \disf}\1\leq (A\cK )^t \pf_{t, f}\1
$$
still on  $\D_{\eta}^*$.
Moreover, we have  Theorem \ref{32} which implies, for every $t\geq \Th$,
$$
 \pf_{t, f}\1 (w) \leq M^t (\log |w|)^{1-t}\quad , \quad w\in \D_{\eta}^*.
$$
Combining all these relations and taking $t=\Th$ we get
$$
\pf_{\Th , \disF_\l} \1 (w) = \pf_{\Th , \disF}\1(w/\l) \leq (A\cK M) ^\Th \Big(\log (\eta /l_0)\Big)^{1-\Th }
$$
for every $w\in \D_{\eta}^*$ and every $0< |\l | \leq l_0$. 
 Since $t=\Th =\log 4 / \log 3 >1$ we can choose $l_0$
small enough so that
$$ (A\cK M) ^\Th\Big(\log (\eta /l_0)\Big)^{1-\Th } \leq \frac 12.$$
Then 
$$
\frac 1n \log \pf^n_{\Th , \disF_\l} \1 (w) \leq \log 1/2 \quad \text{for every $w\in \D_{\eta}^*$ and every $n\geq 1$}
$$
which implies that $P_{\disF_\l} (\Th ) <0$ whenever $0< |\l |\leq l_0$.

\epf

\bpf[Proof of Theorem \ref{theo main}]
Given  Proposition \ref{x.24} and the fact that $\bf F$ is of finite order (Lemma \ref{x.4}), it remains to show that the hyperbolic dimension does not vary analytically.
We know from Proposition \ref{37} that $P_{\disF_1}(t )>0$ for some $t>\Th =\log 4 / \log 3$. In this case,
the Bowen's Formula in  Proposition \ref{38} shows that
$$HypDim(\disF_1 ) > \Th\,.$$
On the other hand, $P_{\disF_{\l}}(t )<0$ for all $\l \in (0,l_0]$ where $l_0$ comes from Proposition \ref{x.30}.
Again  Proposition  \ref{38} shows then that 
$$HypDim(\disF_\l) =\Th \quad\text{for every}\quad  0<\l \leq l_0.$$
Consequently, $\l\mapsto HypDim(\disF_\l )$ is not  an analytic function.
\epf

 \smallskip

\section{Irregular hyperbolic functions in Class $\cB$}\label{s9}

In this section we proof Theorem \ref{theo main''}, \ref{theo main'''}
and \ref{theo main'}. First of all, all our examples share the particular value $\Th =\log 4 /\log 3$. But clearly the snowflake construction can 
be modified in order to get functions with the same behavior and with $\Th$ any value in $ ]1,2[$.
The only modification is the choice of the numbers $\rho_n \in [\rho_{min} , \rho_{max}]$ where then
$\rho_{min}, \rho_{max}$ have to be fixed such that \eqref{1} is replaced by
$$
\frac 14 < \rho_{min} < \Big( \frac12\Big)^{2/\Th}< \rho_{max} <\frac 12 \,.
$$
So, we can restrict the discussion here to the particular value  $\Th =\log 4 /\log 3$. 

We know from Lemma \ref{x.4} that all the entire functions we consider 
are of finite order.
 From Proposition \ref{x.30} we directly get functions that fulfill the requirements of Theorem \ref{theo main''}. Combining it with 
the Bowen's Formula of Proposition \ref{38}, Theorem \ref{theo main'''} also follows. 
The remaining point is to show
the affirmation concerning the conformal measure in Theorem \ref{theo main'}.

\smallskip

In view of establishing it
 we need some preliminary considerations on the choice of the Riemannian metric and to clarify the notion
 of conformal measure.
Up to now we have used the cylindrical metric in order to evaluate the derivatives (see \eqref{6'}).
This choice is related to the logarithmic coordinates in \cite{EL92} and it allows to get
a bounded transfer operator as defined in \eqref{6}. 
 However, it is sometimes more convenient 
to make a different choice. For example, employing  the spherical metric allowed the authors in \cite{BKZ-Bowen}
to get the most general Bowen's Formula. 

Consider a general Riemannian metric $d\rho (z)=\rho(z)|dz|$
on $\C$, denoted by $|f'|_\rho= |f'|\frac{\rho\,  \circ f}{\rho}$ the derivative with respect to it 
 and let us have in mind the particular choices
$$
d\rho_{cyl} (z)=\frac{|dz| }{|z|} \quad \text{and} \quad d\rho_{sph} (z)=\frac{|dz| }{1+|z|^2}.
$$
The cylindrical metric as written has a singularity at the origin, a problem that we can neglect since 
we work far away from it especially in the case of disjoint type functions.

\bdfn\label{conf mea} Let $f$ be an entire function.
A finite measure $\nu$ is said to be  $t$--conformal with respect to the metric $\rho$  if for every Borel set $A\subset \mathbb C$ such that $f_{|A}$ is 
injective we have 
$$\nu(f(A))=\int_A |f'|^t_\rho d\nu.$$
\edfn

\noindent
As defined, such a measure is sometimes also called geometric conformal measure since  such a measure is commonly used to analyse the geometry of the Julia set. 

The topological pressure with respect to the cylindrical metric has been defined in \eqref{36}. If $\cL_{\rho,t}$ denotes the operator defined by Formula \eqref{6} but with $|f'|_1$ replaced by $|f'|_\rho$ and if we inject this operator in \eqref{36}
then this defines the  topological pressure 
with respect to the metric $\rho$: 
$$
P_\rho(t) =P_{\rho,f}(t)= \lim _{n\to \infty}\frac 1n \log \cL^n_{\rho,t}\1 (w) \;\; , \;\; w\in\ov  \D_r ^*.
$$
A priori, the transition parameter $\Th_\rho = \inf \{ t>0 \; , \; P_\rho (t) <\infty\}$ can depend on the metric.
In the case of the cylindrical or spherical metric we also write $P_{cyl}$, $\Th_{cyl}$  respectively $P_{sph}$, $\Th_{sph}$.
Recall that for our examples  $\Th_{cyl}=\log 4/\log 3$ and, right from the definition of the pressures, it is clear that
$$
P_{sph}(t) \leq P_{cyl}(t)
$$
hence $\Th_{sph}\leq \Th_{cyl}$. Given these notations, we can now show the following result which contains Theorem \ref{theo main'}.

\bthm
For every $1<\Th<2$ there exists a disjoint type entire function of finite order $f\in \cB$ with transition parameter $\Th$, with $HypDim (f)=\Th $ 
 and which does not have a spherical nor cylindrical conformal measure
supported on its radial Julia set.
\ethm

\bpf Again, we treat the case $\Th =\Th_{cyl} =\log 4 /\log 3$.
Let $f=\disF_{\l_0}$ be the disjoint type entire function of finite order from Proposition \ref{x.30}.
This function has negative cylindrical pressure at $\Th_{cyl}$ and thus
\beq\label{cyl sph}
P_{sph}(\Th_{cyl}) \leq P_{cyl}(\Th_{cyl})<0 .
\eeq
Bowen's Formula (Proposition \ref{38}) implies then that $\Th_{cyl}=HypDim(f)$. We also dispose in 
the same Bowen's Formula with respect to the spherical metric (\cite{BKZ-2009}) so that
$$\Th_{cyl}=HypDim(f)=\inf \{t>0 \; , \;\; P_{sph} (t)<0\}.$$
Combined with \eqref{cyl sph} and with the continuity of $t\mapsto P_{sph} (t)$ on $]\Th_{sph} , \infty [$
we get that 
$$\Th_{cyl}=\Th_{sph}$$
and thus $P_{sph}(t)<0$ for $t\geq \Th_{sph}$ and $P_{sph}(t)=\infty$ if $t< \Th_{sph}$.

Now, assume that this map $f$ has a spherical $t$--conformal measure  supported on $J_r(f)$ for some $t>0$.
Then necessarily $t\ge \Th_{sph}$ and $P_{sph}(t)=0$ by Theorem A in \cite{BKZ18}. But this is not possible 
as we have seen just above and thus such a conformal measure cannot exist.

The analogue for the cylindrical conformal measure also follows. Indeed, assume that $\nu$
is a cylindrical $t$--conformal measure  supported on $J_r(f)$ for some $t>0$. Then
$$
dm= \left( \frac{|z|}{1+|z|^2}\right)^t d\nu
$$
would define a finite spherical $t$--conformal measure  supported on $J_r(f)$. 
But such a measure cannot  exist if   $P_{sph}(t)<0$ (see  Proposition 3.3 in \cite{BKZ18}).
\epf

\section{Appendix}\label{appendix} 
Throughout the paper we used good bilipschitz properties of $h$ and of the rescaled functions $h_m=\mu^m\circ h \circ \mu^{-m}$. They follow from the fact that $h'$ has continuous extension to the boundary and this follows from the smoothness of the boundary of $\hat \cH$. Indeed, the relation between continuous extension of the derivative of a conformal map to the boundary and the geometry of the boundary is the object of Section 3 in Pommerenke's book \cite{PommerenkeBook}. The relevant fact for our application is that the derivative of a conformal map from the unit disk $\D$ onto the inner domain of a Jordan curve $C\subset \C$ has continuous extension to the boundary if $C$ is  Dini-smooth
(see Theorem 3.5 in \cite{PommerenkeBook}). This means that $C$ admits a parametrization $\al :\mathbb S^1=\{|z|=1\}\to C$ whose derivative $\al'$ is Dini-continuous:
$$
\int _0^\pi t^{-1} \om (t, \al ', \mathbb S^1)\, dt < \infty
$$
where the modulus of continuity $\om$ of $\al'$ on a set $A$ is defined by
$$
\om(t,\al ', A) =\sup \Big\{ |\al' (\xi_1 ) -\al'(\xi _2 ) | \; , \;\; |\xi_1-\xi_2|\leq t\; , \; \xi_1,\xi_2\in A \Big\} \,.
$$
The domain $\hat\cH$ and a boundary parametrization $\ga$ has been defined in \eqref{39}. 
In fact, $\partial \hat\cH= \{\sg (y) +iy \; , \; y\in \R\}$.
Since $\sg$ is $C^\infty$--smooth we only have to check what happens near infinity. In order to do so,
consider $\al : I=[-1/2, 1/2] \to \R$ defined by $\al (0)=0$ and 
$$
\al (t) = \frac{1}{\sg (1/t)+ i/t} \quad , \quad 0 < |t| \leq 1/2\,.
$$
\blem\label{40}
The domain $\hat \cH$ is Dini-smooth.
\elem

\bpf
The function $\al \in C^1$ with $\al '(0)=-i$ and 
$$
\al'(t)=\frac{i-14t}{(14t\log |t| -7t +i)^2} \quad , \quad 0< |t| \leq 1/2\,.
$$
Given this derivative, a direct calculation gives for the modulus of continuity 
$\om (t,\al ' ,I)=O (t\log 1/t)$ which shows that $\int _0^{1/2} \frac{\om (\al ' , t, I)}{t} dt <\infty$.
\epf

Theorem 3.5 in \cite{PommerenkeBook} therefore applies and gives  that the derivative of $\tilde h $
defined by $\tilde h (z) = 1/ h(1/z)$ has continuous extension to the boundary of the inverse of the domain $\hat \cH$.
In particular $\tilde h'(0)$ exists and in fact $\tilde h'(0)=1$ because this corresponds to the normalization
$h'(\infty ) =1$ that we assumed in Section \ref{s2}.

Remember that we introduced the rescaled maps 
$$h_m=\mu^{-m}\circ h \circ \mu^m : \hat\cH _m = \mu^{-m}  \,\hat\cH  \to \cH$$
in Section \ref{section 4.3}.
\bprop\label{41}
$|h'| \asymp 1$ and $|h_m'| \asymp 1$ uniformly in $m$
and
$h:\hat \cH \to \cH$ and the maps  $h_m: \hat\cH _m \to \cH$ are uniformly bilipschitz.
Moreover, when restricted to $\hat \cH_m\cap \{|z|\geq \mu^{-2}\}$, then the bilipschitz constant $L_m$
of the maps $h_l$, $l\geq m$, satisfies $L_m\to 1$ as $m\to\infty$. Finally,
$$h_m\longrightarrow Id_{\cH} \;\;\;\; \text{as}\;\; m\to\infty.$$
\eprop

\bpf 
The assertion on the derivatives holds since we checked that the domain $\hat\cH$ is Dini-smooth  (Lemma \ref{40})
which then allows to apply Theorem 3.5 in \cite{PommerenkeBook}. 
From this we also get the bilipschitz property since the domains $\cH$ and $\hat \cH_m$ have
sufficiently good convexity properties and $L_m\to 1$ results from $h'(\infty)=1$.

Concerning the last statement, consider $h_m^{-1}:\cH\to \hat \cH_m$ and let $g=\lim_{j\to\infty}h_{m_j}^{-1}:\cH\to \ov \cH$ be the limit of a convergent subsequence. Then $|g'|=1$ in $\cH$ and so $g$ is non-constant, hence a conformal self map of $\cH$. Again since $|g'|=1$ in $\cH$ and since $h_m(0)=0$ for every $m\geq 1$, $g$ is the identity map.
\epf

\blem\label{BL appendix}
$$
\frac{|y|}{L_m}\leq |\Im (h_m (r+iy ))| \leq L_m| y| \quad \text{for all}\quad y\in \R \text{ and } r>0.
$$
\elem

\bpf
Remember that $\ov h (z)=h(\ov z)$, $z\in \cH$. This symmetry implies that $h([0,\infty))=[0,\infty)$ and thus Lemma \ref{BL appendix} follows directly from the fact that $h_m$ is $L_m$--bilipschitz.
\epf


\medskip


\bibliographystyle{plain}

\begin{thebibliography}{10}

\bibitem{Ahl06}
Lars~V. Ahlfors.
\newblock {\em Lectures on quasiconformal mappings}, volume~38 of {\em
  University Lecture Series}.
\newblock American Mathematical Society, Providence, RI, second edition, 2006.
\newblock With supplemental chapters by C. J. Earle, I. Kra, M. Shishikura and
  J. H. Hubbard.

\bibitem{AR97}
James~W. Anderson and Andr\'{e}~C. Rocha.
\newblock Analyticity of {H}ausdorff dimension of limit sets of {K}leinian
  groups.
\newblock {\em Ann. Acad. Sci. Fenn. Math.}, 22(2):349--364, 1997.

\bibitem{AstZins94}
K.~Astala and M.~Zinsmeister.
\newblock Holomorphic families of quasi-{F}uchsian groups.
\newblock {\em Ergodic Theory Dynam. Systems}, 14(2):207--212, 1994.

\bibitem{AIM-2009}
Kari Astala, Tadeusz Iwaniec, and Gaven Martin.
\newblock {\em Elliptic partial differential equations and quasiconformal
  mappings in the plane}, volume~48 of {\em Princeton Mathematical Series}.
\newblock Princeton University Press, Princeton, NJ, 2009.

\bibitem{B07}
Krzysztof Bara\'nski.
\newblock Trees and hairs for some hyperbolic entire maps of finite order.
\newblock {\em Math. Z.}, 257(1):33--59, 2007.

\bibitem{B08}
Krzysztof Bara\'nski.
\newblock Hausdorff dimension of hairs and ends for entire maps of finite
  order.
\newblock {\em Math. Proc. Camb. Phil. Soc.}, 145:719--737, 2008.

\bibitem{BKZ-Bowen}
Krzysztof Bara\'nski, Boguslawa Karpi\'{n}ska, and Anna Zdunik.
\newblock Bowen's formula for meromorphic functions.
\newblock {\em Ergodic Theory Dynam. Systems}, 32(4):1165--1189, 2012.

\bibitem{BKZ18}
Krzysztof Bara\'nski, Boguslawa Karpi\'nska, and Anna Zdunik.
\newblock Conformal measures for meromorphic maps.
\newblock {\em Ann. Acad. Sci. Fenn. Math.}, 43(1):247--266, 2018.

\bibitem{BKZ-2009}
Krzysztof Bara{{\'n}}ski,  Boguslawa  Karpi{{\'n}}ska, and Anna
  Zdunik.
\newblock Hyperbolic dimension of {J}ulia sets of meromorphic maps with
  logarithmic tracts.
\newblock {\em Int. Math. Res. Not. IMRN}, (4):615--624, 2009.

\bibitem{Bergweiler-survey}
Walter Bergweiler.
\newblock Iteration of meromorphic functions.
\newblock {\em Bull. Amer. Math., Soc}, 29:151--188, 1993.

\bibitem{BE08.1}
Walter Bergweiler and Alexandre Eremenko.
\newblock Direct singularities and completely invariant domains of entire
  functions.
\newblock {\em Illinois J. Math.}, 52(1):243--259, 2008.

\bibitem{BeurlingAhlfors1956}
Arne Beurling and Lars ~ V. Ahlfors.
\newblock The boundary correspondence under quasiconformal mappings.
\newblock {\em Acta Math.}, 96:125--142, 1956.

\bibitem{Bishop06}
Christopher~J. Bishop.
\newblock A criterion for the failure of {R}uelle's property.
\newblock {\em Ergodic Theory Dynam. Systems}, 26(6):1733--1748, 2006.

\bibitem{Bishop-EL-2015}
Christopher~J. Bishop.
\newblock Models for the {E}remenko-{L}yubich class.
\newblock {\em J. Lond. Math. Soc. (2)}, 92(1):202--221, 2015.

\bibitem{Bishop-S-2016}
Christopher~J. Bishop.
\newblock Models for the Speiser class.
\newblock {\em Proc. Lond. Math. Soc.}, (3) 114(5):765--797, 2017.

\bibitem{Bow79}
Rufus Bowen.
\newblock Hausdorff dimension of quasicircles.
\newblock {\em Inst. Hautes {\'E}tudes Sci. Publ. Math.}, (50):11--25, 1979.

\bibitem{DSZ97}
A. Douady, P. Sentenac and M. Zinsmeister.
\newblock{\em Implosion parabolique et dimension de Hausdorff.}
\newblock CR Acad. Sci. I 325 (1997)

\bibitem{QC and Analysis}
Duren, P., Heinonen, J., Osgood, B., Palka, B. (Eds.).
\newblock Quasiconformal Mappings and Analysis. 
\newblock A Collection of Papers Honoring F.W. Gehring. Springer-Verlag New York, 1998.


\bibitem{EL92}
Alexandre~Eremenko and Mikhail~Yu Lyubich.
\newblock Dynamical properties of some classes of entire functions.
\newblock {\em Annales de l'institut Fourier}, 42(4):989--1020, 1992.

\bibitem{Falconer}
Kenneth Falconer
\newblock Fractal Geometry: Mathematical Foundations and Applications.
\newblock {\em Wiley,} 3rd Edition, 398 pages, 2014.


\bibitem{GO79}
Gehring, F. W. and Osgood, B. G.
\newblock Uniform domains and the quasihyperbolic metric
\newblock {\em J. Analyse Math.} , (36) 50--74, 1979.

\bibitem{HuoWu15}
Shengjin Huo and Shengjian Wu.
\newblock The failure of analyticity of {H}ausdorff dimensions of quasi-circles
  of {F}uchsian groups of the second kind.
\newblock {\em Proc. Amer. Math. Soc.}, 143(3):1101--1108, 2015.


\bibitem{KoUrb}
Janina Kotus and Mariusz Urba\'nski.
\newblock The dynamics and geometry of the Fatou functions.
\newblock Discrete and Continuous Dynamical Systems - A,13,2,291,338,2005-4-1.



\bibitem{LV-1973}
Olli~Lehto and Kalle~I. Virtanen.
\newblock {\em Quasiconformal mappings in the plane}.
\newblock Springer-Verlag, New York-Heidelberg, second edition, 1973.
\newblock Translated from the German by K. W. Lucas, Die Grundlehren der
  mathematischen Wissenschaften, Band 126.

\bibitem{MSS83}
Ricardo ~Ma{\~n}{\'e}, Paulo ~Sad, and Dennis~Sullivan.
\newblock On the dynamics of rational maps.
\newblock {\em Ann. Sci. \'Ecole Norm. Sup. (4)}, 16(2):193--217, 1983.

\bibitem{Martin-1997}
Gaven~J. Martin.
\newblock The distortion theorem for quasiconformal mappings, schottky's
  theorem and holomorphic motions.
\newblock {\em Proc. Amer. Math. Soc.}, 125(4):1095--1103, 1997.




\bibitem{MauldinUrb02}
R.~Daniel Mauldin and Mariusz Urba{{\'n}}ski.
\newblock Dimensions, measures in infinite iterated function systems.
\newblock {\em Proc. London Math. Soc.}, 73(1):105--154, 1996.

\bibitem{Mayer:2017ab}
Volker Mayer.
\newblock A lower bound of the hyperbolic dimension for meromorphic functions
  having a logarithmic h{\"o}lder tract.
\newblock {\em Conformal Geometry and Dynamics}, 22:62--77, 2018.

\bibitem{MyUrb08}
Volker Mayer and Mariusz Urba{{\'n}}ski.
\newblock Geometric thermodynamic formalism and real analyticity for
  meromorphic functions of finite order.
\newblock {\em Ergodic Theory Dynam. Systems}, 28(3):915--946, 2008.

\bibitem{MUmemoirs}
Volker Mayer and Mariusz Urba{{\'n}}ski.
\newblock Thermodynamical formalism and multifractal analysis for meromorphic
  functions of finite order.
\newblock {\em Mem. Amer. Math. Soc.}, 203(954):vi+107, 2010.

\bibitem{MUpreprint3}
Volker Mayer and Mariusz Urbanski.
\newblock Thermodynamic formalism and integral means spectrum of logarithmic
  tracts for transcendental entire functions.
\newblock {\em Trans. Amer. Math. Soc.} 373:7669-7711, 2020. 

\bibitem{Mayer:2016aa}
Volker Mayer, Mariusz Urbanski, and Anna Zdunik.
\newblock Real analyticity for random dynamics of transcendental functions.
\newblock {\em Ergodic Theory Dynamical Systems}, (to appear).

\bibitem{McM87}
Curt McMullen.
\newblock Area and {H}ausdorff dimension of {J}ulia sets of entire functions.
\newblock {\em Trans. Amer. Math. Soc.}, 300(1):329--342, 1987.

\bibitem{Po15}
M.~Pollicott.
\newblock Analyticity of dimensions for hyperbolic surface diffeomorphisms.
\newblock {\em Proc. Amer. Math. Soc.}, 143(8):3465--3474, 2015.

\bibitem{PommerenkeBook}
Christian  Pommerenke.
\newblock {\em Boundary behaviour of conformal maps}, volume 299 of {\em
  Grundlehren der Mathematischen Wissenschaften [Fundamental Principles of
  Mathematical Sciences]}.
\newblock Springer-Verlag, Berlin, 1992.

\bibitem{Rempe09-hyp}
Lasse Rempe.
\newblock Hyperbolic dimension and radial julia sets of transcendental
  functions.
\newblock {\em Proceedings of the American Mathematical Society},
  137(4):1411--1420, 2009.

\bibitem{Rempe09}
Lasse Rempe.
\newblock Rigidity of escaping dynamics for transcendental entire functions.
\newblock {\em Acta Math.}, 203(2):235--267, 2009.

\bibitem{Rempe-HypDim2}
Lasse Rempe-Gillen.
\newblock Hyperbolic entire functions with full hyperbolic dimension and
  approximation by Eremenko-Lyubich functions.
\newblock {\em Proc. Lond. Math. Soc. (3)}, 108(5):1193--1225, 2014.

\bibitem{RempeSixsmith16}
Lasse Rempe-Gillen and Dave Sixsmith.
\newblock Hyperbolic entire functions and the Eremenko--Lyubich class: Class
  $\mathcal B $ b or not class $\mathcal B$ ?
\newblock {\em Mathematische Zeitschrift},  286(3-4): 783--800, 2016.

\bibitem{Rohde-2001}
Steffen.~Rohde.
\newblock Quasicircles modulo bilipschitz maps.
\newblock {\em Rev. Mat. Iberoamericana}, 17:643--659, 2001.

\bibitem{Ru82}
David Ruelle.
\newblock Repellers for real analytic maps.
\newblock {\em Ergodic Theory Dynamical Systems}, 2(1):99--107, 1982.


\bibitem{Rugh}
Hans Henrik Rugh
\newblock On the dimensions of conformal repellers. Randomness and parameter dependency.
\newblock {\em Ann. of Math}, 168-3:695-748, 2008.



\bibitem{Shi98}
M. Shishikura.
\newblock {\em The Hausdorff dimension of the boundary of the Mandelbrot set and Julia sets.}
\newblock Ann. of Math. 147, 225--267, 1998.



\bibitem{SU14}
Bartlomiej Skorulski and Mariusz Urbanski.
\newblock Finer fractal geometry for analytic families of conformal dynamical
  systems.
\newblock {\em Dyn. Syst.}, 29(3):369--398, 2014.


\bibitem{St99-1}
Gwyneth~M. Stallard.
\newblock The {H}ausdorff dimension of {J}ulia sets of hyperbolic meromorphic
  functions.
\newblock {\em Math. Proc. Cambridge Philos. Soc.}, 127(2):271--288, 1999.

\bibitem{Sumi09}
Hiroki Sumi and Mariusz Urba{{\'n}}ski.
\newblock Real analyticity of {H}ausdorff dimension for expanding rational
  semigroups.
\newblock {\em Ergodic Theory Dynam. Systems}, 30(2):601--633, 2010.




\bibitem{UZ03}
Mariusz Urba{{\'n}}ski and Anna Zdunik.
\newblock The finer geometry and dynamics of the hyperbolic exponential family.
\newblock {\em Michigan Math. J.}, 51(2):227--250, 2003.

\bibitem{UZ04}
Mariusz Urba{{\'n}}ski and Anna Zdunik.
\newblock Real analyticity of {H}ausdorff dimension of finer {J}ulia sets of
  exponential family.
\newblock {\em Ergodic Theory Dynam. Systems}, 24(1):279--315, 2004.

\bibitem{VW96}
A.~Verjovsky and H.~Wu.
\newblock Hausdorff dimension of {J}ulia sets of complex {H}\'{e}non mappings.
\newblock {\em Ergodic Theory Dynam. Systems}, 16(4):849--861, 1996.

\bibitem{Zins2000}
Michel Zinsmeister.
\newblock {\em Thermodynamic formalism and holomorphic dynamical systems},
  volume~2 of {\em SMF/AMS Texts and Monographs}.
\newblock American Mathematical Society, Providence, RI; Soci{\'e}t{\'e}
  Math{\'e}matique de France, Paris, 2000.
\newblock Translated from the 1996 French original by C. Greg Anderson.

\end{thebibliography}



\end{document}